\def\overset#1#2{{\mathrel{\mathop {{#2}_{}}\limits^{#1}}}}
\def\underset#1#2{{\mathrel{\mathop {{}_{} {#2}}\limits_{{#1}_{}}}}}
\def\upplim_#1{\underset{#1}{\overline\lim}\;}
\def\lowlim_#1{\underset{#1}{\underline\lim}\;}
\newtheorem{corollary}[equation]{Corollary}
\newtheorem{definition}[equation]{\indent{\it Definition}\rm }
\newtheorem{lemma}[equation]{Lemma}
\newtheorem{proposition}[equation]{Proposition}
\newtheorem{theorem}[equation]{Theorem}
\newcommand{\B}{\mathbb{B}}
\newcommand{\C}{{\mathbb{C}}}
\renewcommand{\P}{{\mathbb{P}}}
\newcommand{\R}{{\mathbb{R}}}
\newcommand{\ddc}{\mathrm{dd^c}}
\newcommand{\rank}{\mathrm{rank}}
\newcommand{\ric}{\mathrm{Ric}}
\numberwithin{equation}{section}
\title[Non-integrated defect relation for meromorphic maps]{Non-integrated defect relation for meromorphic maps from K\"{a}hler manifolds with hypersurfaces of a projective variety in subgeneral position} 
\author{Si Duc Quang}
\author{Quynh Ngoc Le}
\author{Nguyen Thi Nhung}
\begin{document}

\begin{abstract}
In this article, we establish a truncated non-integrated defect relation for meromorphic mappings from a complete K\"{a}hler manifold into a projective variety intersecting a family of hypersurfaces located in subgeneral position, where the truncation level of the defect is explicitly estimated. Our result generalizes and improves previous results. In particular, when the family of hypersurfaces located in general position, our result will implies the previous result of Min Ru-Sogome. In the last part of this paper we will apply our result to study the distribution of the Gauss maps of minimal surfaces.
\end{abstract}

\maketitle

\def\thefootnote{\empty}
\footnotetext{
2010 Mathematics Subject Classification:
Primary 32H30; Secondary 32A22, 30D35.\\
\hskip8pt Key words and phrases: Nevanlinna, second main theorem, meromorphic mapping, non-integrated defect relation.\\
This research is funded by Vietnam National Foundation for Science and Technology Development (NAFOSTED) under grant number 101.04-2018.01.}

\section{Introduction and Main result} 

Let $M$ be a complete K\"{a}hler manifold of dimension $m$ and let $V$ be a subvariety of $k$-dimension of $\P^n(\C)$. Let $f: M\longrightarrow V$ be a meromorphic mapping and $\Omega_f$ be the pull-back of the Fubini-Study form $\Omega$ on $\P^n(\C)$ by $f$. For a positive integer  $\mu_0$ and a hypersurface $D$ of degree $d$ in $\P^n(\C)$ with $f(M)\not\subset D$, we denote by $\nu_f(D)(p)$ the intersection multiplicity of the image of $f$ and $D$ at $f(p)$. 

In 1985, H. Fujimoto \cite{F85} defined the non-integrated defect of $f$ with respect to $D$ truncated to level $\mu_0$ by
$$\delta_f^{[\mu_0]}:= 1- \inf\{\eta\ge 0: \eta \text{ satisfies condition }(*)\}.$$
Here, the condition (*) means that there exists a bounded non-negative continuous function $h$ on $M$ whose order of each zero is not less than $\min \{\nu_f(D), \mu_0\}$ such that
$$d\eta\Omega_f +\dfrac{\sqrt{-1}}{2\pi}\partial\bar\partial\log h^2\ge [\min\{\nu_f(D), \mu_0\}].$$
After that, Fujimoto gave a result analogous to the defect relation in Nevanlinna theory as follows. 

\vskip0.2cm
\noindent
{\bf Theorem  A} (see \cite[Theorem 1.1]{F85}). \ {\it Let $M$ be an $m$-dimensional complete K\"ahler manifold  and $\omega$ be a K\"ahler form of $M.$  Assume that the universal covering of $M$ is biholomorphic to a ball in $\mathbb C^m.$ Let $f : M\to \P^n(\C)$ be a meromorphic map which is linearly nondegenerate (i.e., its image is not contained in any hyperplane of $\P^n(\C)$). Let $H_1,\ldots,H_q$ be hyperplanes of $\P^n(\C)$ in general position. Assume that there eixst $\rho\ge 0$ and a bounded continuous function $h\ge 0$ on $M$ such that 
$$\rho \Omega_f +\ddc \log h^2\ge \ric \ \omega.$$
Then we have
$$\sum_{i=1}^{q} \delta_{f}^{[n]} (H_i) \le n+1+ \rho n(n+1).$$}

Recently,  M. Ru-S. Sogome \cite{RS} generalized Theorem A to the case of the meromorphic mapping intersecting a family of hypersurfaces in general position. We note that the result of M. Ru-S. Sogome is a natural generalization of the result of H. Fujimoto. After that, Q. Yan \cite{Y13} extended the result of M. Ru-S. Sogome by considering the case where the family of hypersurfaces is in subgeneral position. However, his result does not imply the one of M. Ru and S. Sogome. In 2017, by using the ``replacing hypersurfaces'' method, which is proposed by S. D. Quang \cite{Q16-1}, S. D. Quang-N. T. Q. Phuong and N. T. Nhung were successful to generalise the result of M. Ru-S. Sogome to the case of hypersurfaces in subgeneral position. Their result is stated as follows.

\vskip0.2cm
\noindent
{\bf Theorem  B} (see \cite[Theorem 1.1]{Q16-2}). {\it Let $M$ be an $m$-dimensional complete K\"ahler manifold  and $\omega$ be a K\"ahler form of $M.$  Assume that the universal covering of $M$ is biholomorphic to a ball in $\mathbb C^m.$ Let $f$ be an algebraically nondegenerate meromorphic map of $M$ into $\P^n(\C)$. Let $Q_1,\ldots,Q_q$ be hypersurfaces in $\P^n(\C)$ of degree $d_j,$ in $k$-subgeneral position in $\P^n(\C).$  Let $d = l.c.m.\{d_1,\ldots,d_q\}$ (the least common multiple of $\{d_1,\ldots,d_q\}$). Assume that for some $\rho \ge 0,$ there exists a bounded continuous function $h \geq 0$ on $M$ such that
$$\rho\Omega_f + \ddc \log h^2 \geq \ric \ \omega.$$
Then, for each  $\epsilon>0,$ we have
$$\sum_{j=1}^q {\delta}^{[u-1]}_{f} (Q_j) \le p(n+1)+ \epsilon +\dfrac{\rho u(u-1)}{d},$$
where $p=k-n+1,\ u= \bigl ( ^{N +n}_{\ \ n}\bigl )\leq e^{n+2}(dp(n+1)^2 I(\epsilon^{-1}))^n$ and
$N=(n+1)d+p(n+1)^3I(\epsilon^{-1})d.$}

Here, for a real number $x$, we define $I(x):=\min\{a\in\mathbb Z\ ;\ a>x\}$.
 
We also note that, T. V. Tan and V. V. Truong in \cite{TT} also got a non-integrated defect relation for the family of hypersurfaces in subgeneral position. But their definition of ``subgeneral position'' is very special, which has an extra non natural and complicated condition on the intersection of these $q$ hypersurfaces (see Definition 1.1(ii) in \cite{TT}).

The first aim of this paper is to generalise Theorem B to the case of the families of hypersurfaces in subgeneral position with respect to a projective subvariety of $\P^n(\C)$. Before stating our result, we recall the following.

Let $V$ be a subvariety of $\P^n(\C)$ of dimension $k>0$. Let $N\geq n$ and $q\geq N+1.$ Let $Q_1,\ldots,Q_q$ be hypersurfaces in $\mathbb P^n(\mathbb C).$ The hypersurfaces $Q_1,\ldots,Q_q$ are said to be in $N$-subgeneral position with respect to $V$ if 
$$Q_{j_1}\cap\cdots\cap Q_{j_{N+1}}\cap V=\emptyset\text{ for every }1\leq j_1<\cdots<j_{N+1}\leq q.$$
If  $N=n$ then we say that $Q_1,\ldots,Q_q$ are in \textit{general position} w.r.t $V$.

Our main result is stated as follows.
 
\begin{theorem}\label{1.1} 
Let $M$ be an $m$-dimensional complete K\"ahler manifold  and $\omega$ be a K\"ahler form of $M.$  Assume that the universal covering of $M$ is biholomorphic to a ball in $\mathbb C^m.$ Let $f$ be an algebraically nondegenerate meromorphic map of $M$ into a subvariety $V$ of dimension $k$ in $\P^n(\C)$. Let $Q_1,\ldots,Q_q$ be hypersurfaces in $\P^n(\C)$ of degree $d_j,$ in $N$-subgeneral position with respect to $V$.  Let $d = l.c.m.\{d_1,\ldots,d_q\}$ (the least common multiple of $\{d_1,\ldots,d_q\}$). Assume that for some $\rho \ge 0,$ there exists a bounded continuous function $h \geq 0$ on $M$ such that
$$\rho\Omega_f + \ddc \log h^2 \geq \ric \ \omega.$$
Then, for each  $\epsilon>0,$ we have
$$\sum_{j=1}^q {\delta}^{[M_0-1]}_{f} (Q_j) \le p(k+1)+ \epsilon +\dfrac{\rho\epsilon M_0(M_0-1)}{d},$$
where $p=N-k+1,M_0=\left[d^{k^2+k}\deg (V)^{k+1}e^kp^k(2k+4)^kl^k\epsilon^{-k}+1\right]$ and $l=(k+1)q!$.
\end{theorem}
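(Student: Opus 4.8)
\emph{Strategy.} The proof combines three ingredients: a combinatorial replacement lemma that trades the $N$-subgeneral position hypothesis for a general-position one at the cost of a factor $p=N-k+1$ (this is the device that avoids Nochka weights and yields $p(k+1)$ in place of Yan's constant $k(n+1)$ \cite{Y13}); the filtration technique of Corvaja--Zannier and Evertse--Ferretti, applied in the non-integrated setting on $\P^n(\C)$ by Ru--Sogome \cite{RS}; and Fujimoto's current-theoretic method \cite{F85} for non-integrated defect relations on complete K\"ahler manifolds. For the first ingredient, fix a reduced local representation $\tilde f=(f_0,\dots,f_n)$ of $f$. For each permutation $\sigma$ of $\{1,\dots,q\}$ one produces, using only that $Q_{j_1}\cap\cdots\cap Q_{j_{N+1}}\cap V=\emptyset$, hypersurfaces $P_{\sigma,1},\dots,P_{\sigma,k+1}$ of degree $d$, suitable power-corrected generic $\C$-linear combinations of $Q_{\sigma(1)},\dots,Q_{\sigma(N+1)}$, with $P_{\sigma,1}\cap\cdots\cap P_{\sigma,k+1}\cap V=\emptyset$ and, on the region where $|Q_{\sigma(1)}(\tilde f)|\le\cdots\le|Q_{\sigma(q)}(\tilde f)|$,
\[
\sum_{j=1}^{q}\frac{1}{d_j}\log\frac{\|\tilde f\|^{d_j}}{|Q_j(\tilde f)|}\ \le\ p\sum_{i=1}^{k+1}\frac{1}{d}\log\frac{\|\tilde f\|^{d}}{|P_{\sigma,i}(\tilde f)|}+O(1).
\]
Replacing $P_{\sigma,i}$ by $\prod_{\sigma}P_{\sigma,i}$ over all $q!$ orderings removes the dependence on $\sigma$; the combined degree $(k+1)q!\,d=ld$ of these $k+1$ products is what $l$ in the statement records.

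\emph{Step 2: second main theorem on $V$ via a filtration.} Choose an integer $L_0$ of size $\asymp\epsilon^{-1}$, set $\mathcal L=\O_V(L_0 ld)$ and $M_0=h^0(V,\mathcal L)$; an $h^0$-bound of the shape $h^0(V,\O_V(t))\le\deg(V)\,t^{k}/k!+O(t^{k-1})$ together with the count of steps in the filtration below yields the closed form for $M_0$ displayed in the theorem, the constants $e^{k}$, $(2k+4)^{k}$, $p^{k}$, $l^{k}$ originating from that count. Applying the Corvaja--Zannier/Evertse--Ferretti filtration to the $k+1$ general-position hypersurfaces of Step 1, one obtains a basis $\psi_0,\dots,\psi_{M_0-1}$ of $H^0(V,\mathcal L)$ adapted to a filtration whose graded pieces are controlled by these hypersurfaces; put $F=(\psi_0\circ f:\cdots:\psi_{M_0-1}\circ f)\colon M\to\P^{M_0-1}(\C)$, well defined and with $\C$-linearly independent coordinates because $f$ is algebraically nondegenerate, and let $W$ be the (generalized) Wronskian of $\psi_0\circ f,\dots,\psi_{M_0-1}\circ f$, so $W\not\equiv0$. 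The filtration estimate bounds $\sum_i\frac1d\log\frac{\|\tilde f\|^{d}}{|P_i(\tilde f)|}$ pointwise by $\big(k+1+\tfrac{\epsilon}{p}\big)$ times the normalized logarithmic size of $F$, minus a term carrying $\log|W|$, plus $O(\log\|\tilde f\|)$; crucially, the vanishing of $W$ along $f^{-1}(Q_j)$ absorbs exactly the part of $\nu_f(Q_j)$ in excess of $M_0-1$. Combining with Step 1 produces one global pointwise inequality bounding $\sum_j\frac1{d_j}\log\frac{\|\tilde f\|^{d_j}}{|Q_j(\tilde f)|}$ by $\bigl(p(k+1)+\epsilon\bigr)$ times the logarithmic size of $F$, a Wronskian term, and $O(\log\|\tilde f\|)$, with the truncation level $M_0-1$ built in.

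\emph{Step 3: passage to the non-integrated defect.} By the definition of the defect it suffices to show that for every choice of $\eta_j\ge0$ $(1\le j\le q)$, each satisfying condition $(*)$ for $Q_j$, one has $\sum_{j=1}^{q}(1-\eta_j)\le p(k+1)+\epsilon+\rho\epsilon M_0(M_0-1)/d$. Fix such $\eta_j$ and let $h_j$ be the corresponding bounded nonnegative continuous functions, so that $d_j\eta_j\Omega_f+\ddc\log h_j^{2}\ge[\min\{\nu_f(Q_j),M_0-1\}]$ and the zeros of $h_j$ have order at least $\min\{\nu_f(Q_j),M_0-1\}$. Passing to the universal covering ball and invoking $\rho\Omega_f+\ddc\log h^{2}\ge\ric\,\omega$, one assembles from the $h_j$, from $h$, from the norm of $F$ and from the norm of $W$ (measured against a suitable power of the volume form of $\omega$) a bounded nonnegative function $v$ on $M$ for which the pointwise inequality of Step 2, the truncation property of $W$, and the curvature hypothesis combine to give $\ddc\log v\ge\ric\,\omega$ off a nowhere-dense analytic subset, the coefficient of $\Omega_f$ in this estimate being $\sum_j(1-\eta_j)-p(k+1)-\epsilon-\rho\epsilon M_0(M_0-1)/d$; here $M_0(M_0-1)$ is twice the vanishing weight of the Wronskian of $M_0$ functions, and the extra factors of $\epsilon$ trace back to the normalizations of Step 2. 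The key lemma of Fujimoto \cite{F85} for complete K\"ahler manifolds whose universal cover is a ball then forces this coefficient to be nonpositive, which is the asserted inequality.

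\emph{Main difficulty.} The delicate point is Step 2: setting up the filtration with control explicit enough that the truncation level comes out exactly $M_0-1$ for the stated closed-form $M_0$, while the coefficient of the logarithmic size of $F$ stays $p(k+1)+\epsilon$. This needs sharp bounds for $h^0(V,\O_V(t))$ in terms of $\deg V$, $k$, $d$, and a careful count of the number of steps in the filtration (the source of the constants $e^{k}$, $(2k+4)^{k}$, $p^{k}$, $l^{k}$). A second, structural difficulty, inherent to the non-integrated setting, is that there are no spheres to integrate over and no Jensen formula available, so every estimate that would classically be an averaged second-main-theorem inequality must be run as a global inequality between currents and closed with the K\"ahler-completeness lemma; in particular the bookkeeping relating the truncated divisors $[\min\{\nu_f(Q_j),M_0-1\}]$ to the divisor of $W$ must be carried out pointwise.
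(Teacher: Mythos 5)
Your overall architecture (a replacement lemma to avoid Nochka weights, the Evertse--Ferretti/Ru Chow--Hilbert weight machinery, then Fujimoto's plurisubharmonic-function method on the covering ball) is the same as the paper's, but Step 1 contains a genuine flaw that Step 2 then inherits. You remove the dependence on the ordering $\sigma$ by replacing $P_{\sigma,1},\dots,P_{\sigma,k+1}$ with the $k+1$ products $\tilde P_i=\prod_{\sigma}P_{\sigma,i}$ and then feed these into the filtration as ``the $k+1$ general-position hypersurfaces.'' These products are \emph{not} in general position with respect to $V$: under $N$-subgeneral position with $N>k$ one may have a point $x\in V\cap Q_1\cap\cdots\cap Q_N$; then $\tilde P_1(x)=0$ because $\tilde P_1$ contains the factor $Q_{\sigma(1)}=Q_1$ for suitable $\sigma$, and for each $i\ge 2$ one can pick a permutation with $\{\sigma(2),\dots,\sigma(N-k+i)\}\subset\{1,\dots,N\}$, so the corresponding factor $P_{\sigma,i}$ (a combination of those $Q_j$) also vanishes at $x$. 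Hence $\bigcap_{i=1}^{k+1}\tilde P_i\cap V\neq\emptyset$, the Evertse--Ferretti lower bound for the Chow weight (the paper's Lemma \ref{2.2}) is not applicable to them, and your filtration step collapses. The paper's actual device is different: it keeps \emph{all} $l=(k+1)q!$ hypersurfaces $P_{i,j}$, uses them as the coordinates of a finite morphism $\Phi:V\to\P^{l-1}(\C)$, sets $Y=\Phi(V)$ (so $\Delta=\deg Y\le d^k\deg V$), and applies Theorem \ref{2.1} and Lemma \ref{2.2} to $Y$ with a point-dependent weight vector ${\bf c}_z$; at each $z$ only the block of $k+1$ coordinates attached to the permutation adapted to $z$ enters Lemma \ref{2.2}, and for that block general position with respect to $Y$ does hold. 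This is why $l$ appears as the dimension of the auxiliary ambient space (and in the bound $u<pm_0l\epsilon^{-1}+1$ that produces the closed form of $M_0$), not as the degree of product hypersurfaces; with your products the stated value of $M_0$ would not come out either.

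Two further points in Step 3 are under-specified rather than wrong, but they are where the analytic work lives. First, ``Fujimoto's key lemma forces the coefficient to be nonpositive'' is not how the contradiction is obtained: one must show $\int_{\B^m(1)}e^{\zeta}dV<\infty$ for the plurisubharmonic $\zeta=\omega+tv$, and this requires the Wronskian logarithmic-derivative estimate on spheres (Proposition \ref{pro2.2} of Ru--Sogome), the choice of $t$ and $\delta$ with $(\sum_i|\alpha_i|)bt<\delta<1$, and the inclusion of the correction factor $\prod_{i,j}|P_{i,j}(\tilde f)|^{m_0/u}$ in $v$ to absorb the error term $\frac{m_0}{u}\max_{i,j}\nu^0_{P_{i,j}(\tilde f)}$ from the Hilbert-weight truncation estimate; the finiteness then contradicts the results of Yau and Karp. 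Second, this argument only covers the case $R_0<\infty$ with $\limsup_{r\to R_0}T_f(r,r_0)/\log\frac{1}{R_0-r}<\infty$; the complementary case must be treated separately by a genuine second main theorem with truncated counting functions (integration on spheres and Jensen's formula are available there), combined with $\delta^{[M_0-1]}_f(Q_j)\le\delta^{[M_0-1]}_{f,*}(Q_j)$. Your sketch omits this case division, so even granting Step 2 it would not yet yield the theorem as stated.
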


Here, by the notation $[x]$ we denote the biggest integer which does not exceed the real number $x$.

In the case of the family of hypersurfaces is in general position, we get $N=n$. Moreover, since $(n-k+1)(k+1)\le (\frac{n}{2}+1)^2$ for every $1\le k\le n$,  
letting $\epsilon=1,$
we obtain the following corollary.
\begin{corollary}\label{1.2} Let $f:M\to \P^n(\C)$ be a meromorphic mapping and let $\{Q_i\}_{i=1}^q$ be hypersurfaces in $\P^n(\C)$ of degree $d_i$, located in general position. Then, we have
$$\sum_{j=1}^q {\delta}^{[M_0-1]}_{f} (Q_j) \le \left(\dfrac{n}{2}+1\right)^2+ 1 +\dfrac{\rho M_0(M_0-1)}{d},$$
where $M_0= \left[d^{n^2+n}e^n(2n+4)^nl^n+1\right]$ and $l=(n+1)q!$.
\end{corollary}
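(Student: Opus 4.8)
The Corollary is a direct specialization of Theorem~\ref{1.1}, so the plan is essentially to unwind the constants and to clean up the error term. First I would apply Theorem~\ref{1.1} with $V=\P^n(\C)$, so that $k=n$, $\deg(V)=1$ and $l=(n+1)q!$; since a family of hypersurfaces is in general position precisely when it is in $n$-subgeneral position with respect to $V$, this is the case $N=n$, hence $p=N-k+1=1$. Substituting these values, the quantity $M_0$ of Theorem~\ref{1.1} becomes exactly $M_0=\left[d^{n^2+n}e^n(2n+4)^nl^n\epsilon^{-n}+1\right]$, and the leading constant $p(k+1)$ in the defect inequality becomes $n+1$. Thus Theorem~\ref{1.1} already yields
$$\sum_{j=1}^q\delta^{[M_0-1]}_f(Q_j)\le (n+1)+\epsilon+\frac{\rho\,\epsilon\,M_0(M_0-1)}{d}.$$

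Next I would pass to the leading constant stated in the Corollary by the arithmetic--geometric mean inequality,
$$(n-k+1)(k+1)\le\left(\frac{(n-k+1)+(k+1)}{2}\right)^2=\left(\frac{n}{2}+1\right)^2\qquad\text{for every }1\le k\le n,$$
and in particular $n+1\le\left(\frac{n}{2}+1\right)^2$, which replaces $n+1$ by $\left(\frac{n}{2}+1\right)^2$ on the right-hand side of the inequality above.

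Finally, to turn the error terms $\epsilon$ and $\rho\epsilon$ into $1$ and $\rho$, I would run the standard limiting device indicated in the text: apply the inequality just obtained with $\epsilon$ replaced by $1+\epsilon'$, $\epsilon'>0$. For all sufficiently small $\epsilon'$ the integer part defining $M_0$ is locally constant in $\epsilon'$ (it changes only at the finitely many values for which the argument is an integer), so $M_0$ can be held fixed while $\epsilon'\to 0^+$; since the left-hand side then does not depend on $\epsilon'$ and the right-hand side is continuous in $\epsilon'$, letting $\epsilon'\to 0^+$ gives
$$\sum_{j=1}^q\delta^{[M_0-1]}_f(Q_j)\le\left(\frac{n}{2}+1\right)^2+1+\frac{\rho\,M_0(M_0-1)}{d},$$
as claimed. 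I do not expect a genuine obstacle here, since no analytic input beyond Theorem~\ref{1.1} is required; the only points needing care are the bookkeeping of how $M_0$ depends on the parameter $\epsilon$ through the substitution $\epsilon\mapsto 1+\epsilon'$, and the check that the truncation level $M_0-1$ of the counting functions is unaffected in the limit.
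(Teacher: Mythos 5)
Your proposal is correct and follows essentially the same route as the paper: specialize Theorem \ref{1.1} to $V=\P^n(\C)$ (so $k=n$, $N=n$, $p=N-k+1=1$, $\deg V=1$, $l=(n+1)q!$), invoke $(n-k+1)(k+1)\le\left(\frac{n}{2}+1\right)^2$, and substitute $\epsilon=1+\epsilon'$ before letting $\epsilon'\to 0$. Your additional bookkeeping about $M_0$ being locally constant in $\epsilon'$ merely makes explicit a point the paper leaves implicit in its one-line derivation.
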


In the last part of this paper, we will apply Theorem \ref{1.1} to give a non-integrated defect relation for the Gauss map of a regular submanifold of $\C^m$ (see Theorem \ref{6.1.} below).

\textbf{Acknowledgements.} This work was done during a stay of the first and the second authors at Vietnam Institute for Advanced Study in Mathematics. They would like to thank the institute for their support. 

\section{Basic notions and auxiliary results from Nevanlinna theory}

We recall some notions in Nevanlinna theory for a mapping from a ball in $\C^m$ into $\P^n(\C)$ from \cite{Q16-1,Q16-2} as follows.

\noindent
{\bf 2.1. Counting function.}\ We set $||z|| = \big(|z_1|^2 + \dots + |z_m|^2\big)^{1/2}$ for
$z = (z_1,\dots,z_m) \in \mathbb C^m$ and define
\begin{align*}
\B^m(r) &:= \{ z \in \mathbb C^m : ||z|| < r\},\\
S(r) &:= \{ z \in \mathbb C^m : ||z|| = r\}\ (0<r<\infty).
\end{align*}
Define 
$$v_{m-1}(z) := \big(\ddc ||z||^2\big)^{m-1}\quad \quad \text{and}$$
$$\sigma_m(z):= {\rm d^c} \text{log}||z||^2 \land \big(\ddc \text{log}||z||^2\big)^{m-1}
 \text{on} \quad \mathbb C^m \setminus \{0\}.$$

 For a divisor $\nu$ on  a ball $\B^m(R)$ of $\mathbb C^m$, with $R>0$, and for a positive integer $M$ or $M= \infty$, we define the counting function of $\nu$ by
\begin{align*}
&\nu^{[M]}(z)=\min\ \{M,\nu(z)\},\\
&n(t) =
\begin{cases}
\int\limits_{|\nu|\,\cap \B^m(t)}
\nu(z) v_{m-1} & \text  { if } m \geq 2,\\
\sum\limits_{|z|\leq t} \nu (z) & \text { if }  m=1. 
\end{cases}
\end{align*}
Similarly, we define $n^{[M]}(t).$

Define
$$ N(r,r_0,\nu)=\int\limits_{r_0}^r \dfrac {n(t)}{t^{2m-1}}dt \quad (0<r_0<r<R).$$

Similarly, define  $N(r,r_0,\nu^{[M]})$ and denote it by $N^{[M]}(r,r_0,\nu)$.

Let $\varphi : \B^m(r)\rightarrow \C$ be a meromorphic function. Denote by $\nu_\varphi$ the divisor of zeros of $\varphi$. Define
$$N_{\varphi}(r,r_0)=N(r,r_0,\nu_{\varphi}), \ N_{\varphi}^{[M]}(r,r_0)=N^{[M]}(r,r_0,\nu_{\varphi}).$$

For brevity, we will omit the character $^{[M]}$ if $M=\infty$.

\vskip0.2cm
\noindent
{\bf 2.2. Characteristic function and first main theorem.}\ Let $f : \mathbb \B^m(R) \longrightarrow \mathbb P^n(\mathbb C)$ be a meromorphic mapping.
For arbitrarily fixed homogeneous coordinates $(w_0 : \dots : w_n)$ on $\mathbb P^n(\mathbb C)$, we take a reduced representation $\tilde f = (f_0 , \ldots , f_n)$, which means that each $f_i$ is a holomorphic function on $\B^m(R)$ and  $f(z) = \big(f_0(z) : \dots : f_n(z)\big)$ outside the analytic subset $\{ f_0 = \dots = f_n= 0\}$ of codimension $\geq 2$. Set $\Vert \tilde f \Vert = \big(|f_0|^2 + \dots + |f_n|^2\big)^{1/2}$.

The characteristic function of $f$ is defined by 
$$ T_f(r,r_0)=\int_{r_0}^r\dfrac{dt}{t^{2m-1}}\int\limits_{\B^m(t)}f^*\Omega\wedge v^{m-1}, \ (0<r_0<r<R). $$
By Jensen's formula, we will have
\begin{align*}
T_f(r,r_0)= \int\limits_{S(r)} \log\Vert f \Vert \sigma_m -
\int\limits_{S(r_0)}\log\Vert \tilde f\Vert \sigma_m +O(1), \text{ (as $r\rightarrow R$)}.
\end{align*}

Let $Q$ be a hypersurface in $\P^n(\C)$ of degree $d$. Throughout this paper, we sometimes identify a hypersurface with the defining polynomial if there is no confusion.  Then we may write
$$ Q(\omega)=\sum_{I\in\mathcal T_d}a_I\omega^I, $$
where $\mathcal T_d=\{(i_0,\ldots,i_n)\in\mathbb Z_+^{n+1}\ ;\ i_0+\cdots +i_n=d\}$, $\omega =(\omega_0,\ldots,\omega_n)$, $\omega^I=\omega_0^{i_0}\cdots\omega_n^{i_n}$ with $I=(i_0,\ldots,i_n)\in\mathcal T_d$ and $a_I\ (I\in\mathcal T_d)$ are constants, not all zeros. In the case $d=1$, we call $Q$ a hyperplane of $\P^n(\C)$.

The proximity function of $f$ with respect to $Q$, denoted by $m_f (r,r_0,Q)$, is defined by
$$m_f (r,r_0,Q)=\int_{S(r)}\log\dfrac{||\tilde f||^d}{|Q(\tilde f)|}\sigma_m-\int_{S(r_0)}\log\dfrac{||\tilde f||^d}{|Q(\tilde f)|}\sigma_m,$$
where $Q(\tilde f)=Q(f_0,\ldots,f_n)$. This definition is independent of the choice of the reduced representation of $f$. 

We denote by $f^*Q$ the pullback of the divisor $Q$ by $f$. We may see that $f^*Q$ identifies with the divisor of zeros $\nu^0_{Q(\tilde f)}$ of the function $Q(\tilde f)$. By Jensen's formula, we have
$$N(r,r_0,f^*Q)=N_{Q(\tilde f)}(r,r_0)=\int_{S(r)}\log |Q(\tilde f)|\sigma_m-\int_{S(r_0)}\log |Q(\tilde f)| \sigma_m.$$
Then the first main theorem in Nevanlinna theory for meromorphic mappings and hypersurfaces is stated as follows.

\begin{theorem}[First Main Theorem] Let $f : \mathbb B^m(R) \to \P^n(\C)$ be a holomorphic map, and let $Q$ be a hypersurface in $\P^n(\C)$ of degree $d$. If $f (\mathbb C) \not \subset Q$, then for every real number $r$ with $r_0 < r < R$,
$$dT_f (r,r_0)=m_f (r,r_0,Q) + N(r,r_0,f^*Q)+O(1),$$
where $O(1)$ is a constant independent of $r$.
\end{theorem}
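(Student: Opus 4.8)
The plan is to derive the identity by a direct bookkeeping of the three Jensen-type formulas already recorded in this section, so that no new analytic input is needed beyond the Poincar\'e--Lelong formula for a single holomorphic function. First I would fix a reduced representation $\tilde f=(f_0,\dots,f_n)$ of $f$ on $\mathbb B^m(R)$ and form the holomorphic function $Q(\tilde f)=Q(f_0,\dots,f_n)$. The hypothesis $f(\mathbb B^m(R))\not\subset Q$ guarantees that $Q(\tilde f)\not\equiv 0$, so its zero divisor $\nu^0_{Q(\tilde f)}$ is well defined and, as noted above, is identified with the pullback divisor $f^*Q$; this is precisely what makes $N(r,r_0,f^*Q)$ and $m_f(r,r_0,Q)$ meaningful.

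Next I would invoke the Jensen formula for the holomorphic function $Q(\tilde f)$, which gives
$$N(r,r_0,f^*Q)=\int_{S(r)}\log|Q(\tilde f)|\sigma_m-\int_{S(r_0)}\log|Q(\tilde f)|\sigma_m,$$
exactly as recorded just before the theorem. On the other hand, the definition of the proximity function reads
$$m_f(r,r_0,Q)=\int_{S(r)}\log\frac{\|\tilde f\|^d}{|Q(\tilde f)|}\sigma_m-\int_{S(r_0)}\log\frac{\|\tilde f\|^d}{|Q(\tilde f)|}\sigma_m.$$
Adding these two and splitting $\log(\|\tilde f\|^d/|Q(\tilde f)|)=d\log\|\tilde f\|-\log|Q(\tilde f)|$, the two occurrences of $\int_{S(r)}\log|Q(\tilde f)|\sigma_m$ (and likewise the two on $S(r_0)$) cancel, leaving
$$m_f(r,r_0,Q)+N(r,r_0,f^*Q)=d\Big(\int_{S(r)}\log\|\tilde f\|\sigma_m-\int_{S(r_0)}\log\|\tilde f\|\sigma_m\Big).$$

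Finally I would compare the right-hand side with the Jensen-formula expression for the characteristic function, namely $T_f(r,r_0)=\int_{S(r)}\log\|\tilde f\|\sigma_m-\int_{S(r_0)}\log\|\tilde f\|\sigma_m+O(1)$, so that the parenthesized quantity equals $T_f(r,r_0)+O(1)$ and the desired identity $dT_f(r,r_0)=m_f(r,r_0,Q)+N(r,r_0,f^*Q)+O(1)$ follows, with $O(1)$ a constant depending only on the fixed reduced representation and on $Q$ but not on $r$. The only points requiring care — and hence the main (though mild) obstacle — are the well-definedness and finiteness of the boundary integrals: one must check that $\int_{S(r)}\log|Q(\tilde f)|\sigma_m$ is finite, which is where $Q(\tilde f)\not\equiv 0$ is used, and that $\log(\|\tilde f\|^d/|Q(\tilde f)|)$ is bounded below, which holds because $Q$ is a homogeneous polynomial of degree $d$ with constant coefficients, whence $|Q(\tilde f)|\le C\|\tilde f\|^d$ for some constant $C>0$. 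Together with the already-noted independence of $m_f$ from the choice of reduced representation, these remarks close the argument.
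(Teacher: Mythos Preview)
The paper does not actually give a proof of this First Main Theorem; it is stated as a standard result, immediately following the Jensen-type identities for $T_f$, $m_f$, and $N$ that you use. Your derivation is correct and is precisely the argument that those three preparatory formulas are set up to yield: add the Jensen expression for $N(r,r_0,f^*Q)$ to the defining expression for $m_f(r,r_0,Q)$, cancel the $\log|Q(\tilde f)|$ terms, and identify the remainder with $d\,T_f(r,r_0)+O(1)$. So your approach coincides with the intended (though unwritten) one.
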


If $ \lim\limits_{r\rightarrow 1}\sup\dfrac{T(r,r_0)}{\log 1/(1-r)}= \infty,$ then the Nevanlinna's defect of $f$ with respect to the hypersurface $Q$ truncated to level $l$ is defined by
$$ \delta^{[l]}_{f,*}(Q)=1-\lim\mathrm{sup}\dfrac{N^{[l]}(r,r_0,f^*Q)}{T_f(r,r_0)}.$$
There is a fact that 
$$0\le \delta^{[l]}_f(Q)\le\delta^{[l]}_{f,*}(Q)\le 1. $$
(See Proposition 2.1 in \cite{RS})

\vskip0.2cm
\noindent
{\bf 2.3. Auxiliary results.}\ Repeating the argument in (\cite{F85}, Proposition 4.5), we have the following.

\begin{proposition}\label{pro2.1}
Let $F_0,\ldots ,F_{N}$ be meromorphic functions on the ball $\B^m(R_0)$ in $\mathbb C^m$ such that $\{F_0,\ldots ,F_{N}\}$ are  linearly independent over $\mathbb C.$ Then  there exists an admissible set  
$$\{\alpha_i=(\alpha_{i1},\ldots,\alpha_{im})\}_{i=0}^{N} \subset \mathbb Z^m_+$$
with $|\alpha_i|=\sum_{j=1}^{m}|\alpha_{ij}|\le i \ (0\le i \le N)$ such that the following are satisfied:

(i)\  $W_{\alpha_0,\ldots ,\alpha_{N}}(F_0,\ldots ,F_{N})\overset{Def}{:=}\det{({\mathcal D}^{\alpha_i}\ F_j)_{0\le i,j\le N}}\not\equiv 0.$ 

(ii) $W_{\alpha_0,\ldots ,\alpha_{N}}(hF_0,\ldots ,hF_{N})=h^{N+1}W_{\alpha_0,\ldots ,\alpha_{N}}(F_0,\ldots ,F_{N})$ for any nonzero meromorphic function $h$ on $\B^m(R_0).$
\end{proposition}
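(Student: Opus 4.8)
The plan is to follow the argument of Fujimoto (\cite{F85}, Proposition 4.5), organised around two points. First, assertion (ii) is an elementary consequence of the Leibniz rule, provided the index set $\{\alpha_i\}$ is \emph{admissible} in the sense that it is closed downward under the coordinatewise order (if $\beta\le\alpha_i$ and $\alpha_i$ is in the set, then $\beta$ is in the set); ordering such a set by total degree automatically yields $|\alpha_i|\le i$, since any downward-closed set containing an index of degree $d$ must contain a full chain of lengths $0,1,\dots,d$ below it. Second, assertion (i) reduces, by restriction to a generic complex line, to the classical one-variable Wronskian criterion. Throughout I write $\mathcal{D}^{\alpha}=\partial^{|\alpha|}/\partial z_1^{\alpha_1}\cdots\partial z_m^{\alpha_m}$. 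I record at the outset that multiplying $F_0,\dots,F_{N}$ by a fixed nonzero meromorphic $h$ preserves their $\mathbb C$-linear independence and, by (ii), scales every admissible Wronskian by $h^{N+1}$; hence I may clear denominators and assume the $F_j$ holomorphic on $\B^m(R_0)$.

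I would prove (ii) first, since it legitimizes that reduction. By the Leibniz rule, for each row $i$ one has $\mathcal{D}^{\alpha_i}(hF_j)=\sum_{\beta\le\alpha_i}\binom{\alpha_i}{\beta}\,\mathcal{D}^{\alpha_i-\beta}h\cdot \mathcal{D}^{\beta}F_j$. Because the admissible set is downward closed, every multi-index $\beta\le\alpha_i$ appearing here is itself some $\alpha_k$; consequently the matrix $\big(\mathcal{D}^{\alpha_i}(hF_j)\big)_{i,j}$ factors as $L\cdot\big(\mathcal{D}^{\alpha_k}F_j\big)_{k,j}$, where $L_{ik}=\binom{\alpha_i}{\alpha_k}\mathcal{D}^{\alpha_i-\alpha_k}h$ when $\alpha_k\le\alpha_i$ and $L_{ik}=0$ otherwise. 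After sorting so that $|\alpha_0|\le\cdots\le|\alpha_{N}|$, the relation $\alpha_k\le\alpha_i$ with $k\ne i$ forces $|\alpha_k|<|\alpha_i|$, hence $k<i$, so $L$ is lower triangular with diagonal entries $L_{ii}=\binom{\alpha_i}{\alpha_i}\mathcal{D}^{0}h=h$. Taking determinants gives $W_{\alpha_0,\dots,\alpha_{N}}(hF_0,\dots,hF_{N})=\det(L)\,W_{\alpha_0,\dots,\alpha_{N}}(F_0,\dots,F_{N})=h^{N+1}W_{\alpha_0,\dots,\alpha_{N}}(F_0,\dots,F_{N})$, which is (ii).

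For (i) the plan is to restrict to a generic line. For a direction $a=(a_1,\dots,a_m)$ put $\mathcal{D}_a=\sum_k a_k\partial_{z_k}$, so that $\mathcal{D}_a^{\,i}F_j=\sum_{|\beta|=i}\tfrac{i!}{\beta!}a^{\beta}\mathcal{D}^{\beta}F_j$, and the classical Wronskian $\det\big(\mathcal{D}_a^{\,i}F_j\big)_{0\le i,j\le N}$ of the restricted functions is a polynomial in $a$ whose coefficients are $\mathbb C$-linear combinations of generalized Wronskians $W_{\beta_0,\dots,\beta_{N}}(F_0,\dots,F_{N})$ with $|\beta_i|=i$. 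Since $\mathbb C$-linear independence is preserved on a generic line (the locus of bad lines is a proper analytic subset, by the identity theorem) and the Wronskian of $\mathbb C$-independent holomorphic functions of one variable is not identically zero, some such generalized Wronskian with $|\beta_i|=i$ is $\not\equiv 0$. It then remains to \emph{upgrade} this to an admissible system: equivalently, I would run Fujimoto's induction on $N$, assuming a downward-closed $\{\alpha_0,\dots,\alpha_{N-1}\}$ with nonvanishing Wronskian for $F_0,\dots,F_{N-1}$ and adjoining one addable corner $\alpha_{N}$; if no corner produced a nonzero Wronskian, then Cramer's rule and the vanishing of all extended Wronskians would give a meromorphic relation $F_{N}=\sum_{j<N}b_jF_j$.

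The main obstacle, and the technical heart of (i), is precisely to show that these coefficients $b_j$ are forced to be \emph{constants}, which then contradicts $\mathbb C$-linear independence. The difficulty is that differentiating the relation $\mathcal{D}^{\alpha_i}F_{N}=\sum_{j<N}b_j\mathcal{D}^{\alpha_i}F_j$ reintroduces the derivatives $\mathcal{D}^{\,e_\ell}b_j$, and to annihilate them one needs the full system of $N$ relations $\sum_{j<N}(\mathcal{D}^{\,e_\ell}b_j)\mathcal{D}^{\alpha_i}F_j=0$ over $i=0,\dots,N-1$, whose coefficient matrix is invertible exactly because the size-$N$ Wronskian is nonzero. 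Producing all $N$ of these relations requires that each $\alpha_i+e_\ell$ again lie in the admissible system or be an addable corner, which is exactly where downward-closedness (the availability of the needed corners) must be arranged; controlling this combinatorics, rather than the determinant bookkeeping, is where I expect the real work to lie.
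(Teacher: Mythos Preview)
The paper does not actually supply a proof of this proposition: it simply records the statement with the remark ``Repeating the argument in (\cite{F85}, Proposition 4.5), we have the following'' and moves on. Your proposal is therefore already more than what the paper offers, and it follows precisely the Fujimoto argument the paper invokes: the Leibniz-rule factorisation for (ii) using downward-closedness of the admissible set, and for (i) the inductive construction adjoining one addable corner at a time, with the contradiction step showing that if every corner fails then the Cramer coefficients $b_j$ satisfy $\mathcal{D}^{e_\ell}b_j\equiv 0$ for all $\ell$ and hence are constants. Your identification of the combinatorial point---that one needs each $\alpha_i+e_\ell$ to be either already in the set or an addable corner, so that the resulting linear system has the nonvanishing $N\times N$ Wronskian as coefficient matrix---is exactly the crux of Fujimoto's induction, so your plan matches the cited source and there is nothing to correct.
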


In \cite{RS}, M. Ru and S. Sogome gave the following lemma on logarithmic derivative for the meromorphic mappings of a ball in $\C^m$ into $\P^n(\C)$.

\begin{proposition}[{see \cite{RS}, Proposition 3.3}]\label{pro2.2}
 Let $L_0,\ldots ,L_{N}$ be linear forms of $N+1$ variables and assume that they are linearly independent. Let $F$ be a meromorphic mapping of the ball $\B^m(R_0)\subset\C^m$ into $\P^{N}(\C)$ with a reduced representation $\tilde F=(F_0,\ldots ,F_{N})$ and let $(\alpha_0,\ldots ,\alpha_N)$ be an admissible set of $F$. Set $l=|\alpha_0|+\cdots +|\alpha_N|$ and take $t,p$ with $0< tl< p<1$. Then, for $0 < r_0 < R_0,$ there exists a positive constant $K$ such that for $r_0 < r < R < R_0$,
$$\int_{S(r)}\biggl |z^{\alpha_0+\cdots +\alpha_N}\dfrac{W_{\alpha_0,\ldots ,\alpha_N}(F_0,\ldots ,F_{N})}{L_0(\tilde F)\cdots L_{N}(\tilde F)}\biggl |^t\sigma_m\le K\biggl (\dfrac{R^{2m-1}}{R-r}T_F(R,r_0)\biggl )^p.$$
\end{proposition}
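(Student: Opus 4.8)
The plan is to follow the approach of Ru and Sogome: reduce to the case where the linear forms are the coordinate functions, rewrite the normalized Wronskian as a determinant whose entries are weighted logarithmic derivatives, and estimate these through a weighted H\"older inequality together with a single-function lemma on the logarithmic derivative. The arithmetic of the exponents is arranged so that the hypothesis $0<tl<p<1$ is exactly what is needed. First I would reduce to coordinate forms. Since $L_0,\dots,L_N$ are linearly independent, there is $A=(a_{jk})\in GL_{N+1}(\C)$ with $L_j(\tilde F)=\sum_k a_{jk}F_k$. Setting $G_j=L_j(\tilde F)$, the tuple $(G_0,\dots,G_N)$ is a representation of $A\circ F$, and the multilinear, alternating nature of the Wronskian determinant gives $W_{\alpha_0,\dots,\alpha_N}(G_0,\dots,G_N)=(\det A)\,W_{\alpha_0,\dots,\alpha_N}(F_0,\dots,F_N)$, so that
\[
\frac{W_{\alpha_0,\dots,\alpha_N}(F_0,\dots,F_N)}{L_0(\tilde F)\cdots L_N(\tilde F)}=\frac{1}{\det A}\cdot\frac{W_{\alpha_0,\dots,\alpha_N}(G_0,\dots,G_N)}{G_0\cdots G_N}.
\]
As $A$ induces an automorphism of $\P^N(\C)$, one has $\Vert A\tilde F\Vert\asymp\Vert\tilde F\Vert$ and $T_{A\circ F}(R,r_0)=T_F(R,r_0)+O(1)$, so it suffices to bound the integral with each $L_j(\tilde F)$ replaced by $G_j$. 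Factoring $G_j$ out of the $j$-th column,
\[
\frac{W_{\alpha_0,\dots,\alpha_N}(G_0,\dots,G_N)}{G_0\cdots G_N}=\det\left(\frac{\mathcal D^{\alpha_i}G_j}{G_j}\right)_{0\le i,j\le N}=\sum_{\sigma}\mathrm{sgn}(\sigma)\prod_{i=0}^N\frac{\mathcal D^{\alpha_i}G_{\sigma(i)}}{G_{\sigma(i)}},
\]
the sum being over the permutations $\sigma$ of $\{0,\dots,N\}$.

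Next, since $t<p<1$, the elementary inequality $(\sum_\sigma a_\sigma)^t\le\sum_\sigma a_\sigma^t$ reduces the problem to bounding $\int_{S(r)}\prod_{i}\bigl|z^{\alpha_i}\mathcal D^{\alpha_i}G_{\sigma(i)}/G_{\sigma(i)}\bigr|^t\sigma_m$ for each fixed $\sigma$. The factors with $\alpha_i=0$ equal $1$ and are discarded; for the remaining indices (where $|\alpha_i|\ge1$ and $\sum_i|\alpha_i|=l$) I would apply H\"older's inequality with the weighted exponents $q_i=l/|\alpha_i|$, which satisfy $\sum_i q_i^{-1}=1$, obtaining
\[
\int_{S(r)}\prod_i\left|z^{\alpha_i}\frac{\mathcal D^{\alpha_i}G_{\sigma(i)}}{G_{\sigma(i)}}\right|^t\sigma_m\le\prod_i\left(\int_{S(r)}\left|z^{\alpha_i}\frac{\mathcal D^{\alpha_i}G_{\sigma(i)}}{G_{\sigma(i)}}\right|^{tl/|\alpha_i|}\sigma_m\right)^{|\alpha_i|/l}.
\]
The point of the weighting is that in the $i$-th factor the product of the integration exponent and the derivative order is $(tl/|\alpha_i|)\cdot|\alpha_i|=tl<1$, which is precisely the regime covered by the single-function estimate below, and leaves the slack $p-tl>0$.

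The core ingredient is the several-variable lemma on the logarithmic derivative: for a nonzero meromorphic $\varphi$ on $\B^m(R_0)$, a multi-index $\alpha$, and $0<s$ with $s|\alpha|<1$, there is a constant $C$ with
\[
\int_{S(r)}\left|z^{\alpha}\frac{\mathcal D^{\alpha}\varphi}{\varphi}\right|^{s}\sigma_m\le C\left(\frac{R^{2m-1}}{R-r}\,T_\varphi(R,r_0)\right)^{s|\alpha|}.
\]
I would prove this by the standard route: reduce via Jensen's formula and submean-value estimates to controlling the spherical mean of $\log^+|z^\alpha\mathcal D^\alpha\varphi/\varphi|$ by the growth of $T_\varphi$, and then pass to the $L^s$ norm using the concavity of $\log$ together with the Borel-type calculus lemma in its integrated form, which is what produces the factor $(R-r)^{-1}$ and keeps the bound uniform in $r$ without exceptional sets. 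Applying this with $s=tl/|\alpha_i|$ and $\varphi=G_{\sigma(i)}$ gives a bound with exponent $s|\alpha_i|=tl$; raising to the power $|\alpha_i|/l$ and multiplying over $i$ produces total exponent $\sum_i t|\alpha_i|=tl$ on $\frac{R^{2m-1}}{R-r}T_{G_{\sigma(i)}}(R,r_0)$. Finally, replacing each $T_{G_{\sigma(i)}}(R,r_0)$ by $T_F(R,r_0)+O(1)$, using $tl<p$ and $\frac{R^{2m-1}}{R-r}T_F(R,r_0)\ge1$ to raise the exponent from $tl$ to $p$, and absorbing the $(N+1)!$ permutation terms and all constants into a single $K$, gives the claimed inequality.

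The main obstacle is the single-function logarithmic derivative estimate in its precise integral form; the remainder is the bookkeeping of the H\"older exponents, where the weighting $q_i=l/|\alpha_i|$ and the hypothesis $0<tl<p<1$ do all the work.
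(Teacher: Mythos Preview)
The paper does not give its own proof of this proposition: it is quoted verbatim from Ru--Sogome \cite{RS} as their Proposition~3.3 and used as a black box, so there is no argument in the paper to compare against. Your proposal is a faithful outline of the Ru--Sogome proof itself (reduction to coordinate forms via $GL_{N+1}$, expansion of the Wronskian as a permutation sum, the weighted H\"older step with exponents $q_i=l/|\alpha_i|$, and the single-function logarithmic-derivative lemma), and the bookkeeping of exponents is correct.

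One small point worth tightening in a full write-up: in the final step you raise the exponent from $tl$ to $p$ by asserting $\frac{R^{2m-1}}{R-r}T_F(R,r_0)\ge 1$. This quantity need not be $\ge 1$ for all admissible $r<R$ (e.g.\ when $R$ is only slightly larger than $r_0$), so the clean way is to use $X^{tl}\le X^p+1$ for $X\ge 0$ and then observe that on the regime where $X$ is small the left-hand integral is already bounded by a constant (the integrand is locally bounded and $\sigma_m$ has finite mass), so the additive $1$ can be absorbed into $K$. Apart from this cosmetic issue, and the acknowledged reliance on the single-function $L^s$ logarithmic-derivative estimate (which is indeed the substantive ingredient and is proved in Fujimoto's and Ru--Sogome's work), your plan is correct.
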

Here $z^{\alpha_i}=z_1^{\alpha_{i1}}\cdots z_m^{\alpha_{im}},$ where $\alpha_i=(\alpha_{i1},\ldots,\alpha_{im})\in\mathbb N^m_0$.

\vskip0.2cm 
\noindent
{\bf 2.4 Chow weights and Hilbert weights.} We recall the notion of Chow weights and Hilbert weights from \cite{R}.

Let $X\subset\P^n(\C)$ be a projective variety of dimension $k$ and degree $\Delta$.  To $X$ we associate, up to a constant scalar, a unique polynomial
$$F_X(\textbf{u}_0,\ldots,\textbf{u}_k) = F_X(u_{00},\ldots,u_{0n};\ldots; u_{k0},\ldots,u_{kn})$$
in $k+1$ blocks of variables $\textbf{u}_i=(u_{i0},\ldots,u_{in}), i = 0,\ldots,k$, which is called the Chow form of $X$, with the following
properties: $F_X$ is irreducible in $\C[u_{00},\ldots,u_{kn}]$, $F_X$ is homogeneous of degree $\Delta$ in each block $\textbf{u}_i, i=0,\ldots,k$, and $F_X(\textbf{u}_0,\ldots,\textbf{u}_k) = 0$ if and only if $X\cap H_{\textbf{u}_0}\cap\cdots\cap H_{\textbf{u}_k}\ne\emptyset$, where $H_{\textbf{u}_i}, i = 0,\ldots,k$, are the hyperplanes given by
$$u_{i0}x_0+\cdots+ u_{in}x_n=0.$$

Let $F_X$ be the Chow form associated to $X$. Let ${\bf c}=(c_0,\ldots, c_n)$ be a tuple of real numbers. Let $t$ be an auxiliary variable. We consider the decomposition
\begin{align*}
\begin{split}
F_X(t^{c_0}u_{00},&\ldots,t^{c_n}u_{0n};\ldots ; t^{c_0}u_{k0},\ldots,t^{c_n}u_{kn})\\ 
& = t^{e_0}G_0(\textbf{u}_0,\ldots,\textbf{u}_n)+\cdots +t^{e_r}G_r(\textbf{u}_0,\ldots, \textbf{u}_n).
\end{split}
\end{align*}
with $G_0,\ldots,G_r\in\C[u_{00},\ldots,u_{0n};\ldots; u_{k0},\ldots,u_{kn}]$ and $e_0>e_1>\cdots>e_r$. The Chow weight of $X$ with respect to $c$ is defined by
\begin{align*}
e_X({\bf c}):=e_0.
\end{align*}
For each subset $J = \{j_0,\ldots,j_k\}$ of $\{0,\ldots,n\}$ with $j_0<j_1<\cdots<j_k,$ we define the bracket
\begin{align*}
[J] = [J]({\bf u}_0,\ldots,{\bf u}_n):= \det (u_{ij_t}), i,t=0,\ldots ,k,
\end{align*}
where $\textbf{u}_i = (u_{i_0},\ldots ,u_{in})$ denotes the blocks of $n+1$ variables. Let $J_1,\ldots ,J_\beta$ with $\beta=\binom{n+1}{k+1}$ be all subsets of $\{0,\ldots,n\}$ of cardinality $k+1$.

Then the Chow form $F_X$ of $X$ can be written as a homogeneous polynomial of degree $\Delta$ in $[J_1],\ldots,[J_\beta]$. We may see that for $\textbf{c}=(c_0,\ldots,c_n)\in\R^{n+1}$ and for any $J$ among $J_1,\ldots,J_\beta$,
\begin{align*}
\begin{split}
[J](t^{c_0}u_{00},\ldots,t^{c_n}u_{0n},&\ldots,t^{c_0}u_{k0},\ldots,t^{c_n}u_{kn})\\
&=t^{\sum_{j\in J}c_j}[J](u_{00},\ldots,u_{0n},\ldots,u_{k0},\ldots,u_{kn}).
\end{split}
\end{align*}

For $\textbf{a} = (a_0,\ldots,a_n)\in\mathbb Z^{n+1}_{\ge 0}$ we write ${\bf x}^{\bf a}$ for the monomial $x^{a_0}_0\cdots x^{a_n}_n$. Let $I=I_X$ be the prime ideal in $\C[x_0,\ldots,x_n]$ defining $X$. Let $\C[x_0,\ldots,x_n]_m$ denote the vector space of homogeneous polynomials in $\C[x_0,\ldots,x_n]$ of degree $m$ (including $0$). For $m = 1, 2,\ldots,$ put $I_m :=\C[x_0,\ldots,x_n]_m\cap I$ and define the Hilbert function $H_X$ of $X$ by
\begin{align*}
H_X(m):=\dim (\C[x_0,\ldots,x_n]_m/I_m).
\end{align*}
By the usual theory of Hilbert polynomials,
\begin{align*}
H_X(m)=\Delta\cdot\dfrac{m^n}{n!}+O(m^{n-1}).
\end{align*}
The $m$-th Hilbert weight $S_X(m,{\bf c})$ of $X$ with respect to the tuple ${\bf c}=(c_0,\ldots,c_n)\in\mathbb R^{n+1}$ is defined by
\begin{align*}
S_X(m,{\bf c}):=\max\left (\sum_{i=1}^{H_X(m)}{\bf a}_i\cdot{\bf c}\right),
\end{align*}
where the maximum is taken over all sets of monomials ${\bf x}^{{\bf a}_1},\ldots,{\bf x}^{{\bf a}_{H_X(m)}}$ whose residue classes modulo $I$ form a basis of $\C[x_0,\ldots,x_n]_m/I_m.$

The following theorem is due to J. Evertse and R. Ferretti \cite{EF01} and is restated by M. Ru \cite{R} for the special case when the field $K=\C$.
\begin{theorem}[{Theorem 4.1 \cite{EF01}, see also Theorem 2.1 \cite{R}}]\label{2.1}
Let $X\subset\P^n(\C)$ be an algebraic variety of dimension $k$ and degree $\Delta$. Let $m>\Delta$ be an integer and let ${\bf c}=(c_0,\ldots,c_n)\in\mathbb R^{n+1}_{\geqslant 0}$.
Then
$$ \dfrac{1}{mH_X(m)}S_X(m,{\bf c})\ge\dfrac{1}{(k+1)\Delta}e_X({\bf c})-\dfrac{(2k+1)\Delta}{m}\cdot\left (\max_{i=0,\ldots,n}c_i\right). $$
\end{theorem}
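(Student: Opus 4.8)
\begin{pf*}{Proof proposal for Theorem \ref{2.1}.}
The plan is to prove the inequality in three stages: first reduce to a conveniently normalized integral weight $\mathbf{c}$, then record the exact asymptotic link between the Hilbert weight and the Chow weight, and finally upgrade that asymptotic identity to the stated effective inequality. The genuine difficulty lies entirely in the last stage.

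First I would carry out the reductions. Both sides of the asserted inequality are positively homogeneous of degree one in $\mathbf{c}$: indeed $S_X(m,\lambda\mathbf{c})=\lambda S_X(m,\mathbf{c})$ since the Hilbert weight is a maximum of the linear forms $\mathbf{a}_i\cdot\mathbf{c}$, while $e_X(\lambda\mathbf{c})=\lambda e_X(\mathbf{c})$ and $\max_i\lambda c_i=\lambda\max_i c_i$ follow from the definition of the Chow weight through the substitution $t\mapsto t^\lambda$. Since $\mathbf{c}\mapsto S_X(m,\mathbf{c})$ and $\mathbf{c}\mapsto e_X(\mathbf{c})$ are continuous (in fact convex and piecewise linear) on $\R^{n+1}_{\ge0}$, it suffices to prove the inequality for $\mathbf{c}\in\Q^{n+1}_{\ge0}$, and then, clearing denominators by homogeneity, for $\mathbf{c}\in\Z^{n+1}_{\ge0}$; the real case follows by density and continuity. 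Next I would exploit translation invariance: replacing $\mathbf{c}$ by $\mathbf{c}-(\min_i c_i)\mathbf{1}$ changes $\frac{1}{mH_X(m)}S_X(m,\mathbf{c})$ and $\frac{1}{(k+1)\Delta}e_X(\mathbf{c})$ by the \emph{same} additive constant $-\min_i c_i$ --- the first because every basis monomial has total degree $m$, so $\mathbf{a}_i\cdot\mathbf{1}=m$ and there are $H_X(m)$ of them, and the second because $F_X$ is homogeneous of degree $\Delta$ in each of its $k+1$ blocks, so every bracket-monomial scales its $t$-exponent by $-(\min_i c_i)(k+1)\Delta$. Meanwhile $\max_i c_i$ can only decrease. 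Hence it is enough to treat $\mathbf{c}\in\Z^{n+1}_{\ge0}$ with $\min_i c_i=0$.

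Second I would obtain concrete descriptions of both weights. Using the bracket decomposition recalled above, write $F_X$ as a homogeneous polynomial of degree $\Delta$ in $[J_1],\ldots,[J_\beta]$; since $[J]$ scales by $t^{\sum_{j\in J}c_j}$, the Chow weight $e_X(\mathbf{c})$ equals the maximum, over the bracket-monomials actually occurring in $F_X$, of the exponent $\sum_i m_i\big(\sum_{j\in J_i}c_j\big)$. On the Hilbert side, $\mathbf{c}$ induces a decreasing filtration of $R_m:=\C[x_0,\ldots,x_n]_m/I_{X,m}$, and a greedy (matroid) choice of a monomial basis of highest total weight shows that $S_X(m,\mathbf{c})$ is the sum of the weights over such a basis. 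The bridge between the two is Mumford's identity $e_X(\mathbf{c})=\lim_{m\to\infty}\frac{(k+1)!}{m^{k+1}}S_X(m,\mathbf{c})$, which, together with $H_X(m)=\Delta m^k/k!+O(m^{k-1})$, is precisely the asymptotic form of the desired inequality (one checks the two leading terms agree exactly).

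Third, and this is the crux, I would make the comparison effective. The natural route is induction on $k=\dim X$ using a generic hyperplane section $X\cap H$: cutting by $H$ lowers the dimension by one and preserves the degree $\Delta$, and the weighted Hilbert functions of $X$ and $X\cap H$ differ in a controlled way once $m$ exceeds the Castelnuovo--Mumford regularity of $X$, which for a variety of degree $\Delta$ is effectively bounded in terms of $\Delta$. Tracking the error produced at each of the reduction steps, and invoking the hypothesis $m>\Delta$ to ensure $R_m$ already ``sees'' the full degree of $X$, should yield a deviation between $\frac{1}{mH_X(m)}S_X(m,\mathbf{c})$ and $\frac{1}{(k+1)\Delta}e_X(\mathbf{c})$ bounded by a constant multiple of $\frac{\Delta}{m}\max_i c_i$. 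I expect this to be the main obstacle: promoting Mumford's asymptotic equality to a one-sided inequality with the sharp constant $(2k+1)\Delta$ requires controlling the accumulation of error across the successive hyperplane sections (which is what produces the factor growing like $2k+1$) and ensuring the generic hyperplane avoids the bad loci so that the degree and the bracket structure of the Chow form are preserved. The reductions of the first two paragraphs are essentially formal; all the real content sits in this effective regularity estimate.
\end{pf*}
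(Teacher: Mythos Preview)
The paper does not contain a proof of this theorem. It is quoted verbatim from the cited sources: the sentence immediately preceding the statement reads ``The following theorem is due to J. Evertse and R. Ferretti \cite{EF01} and is restated by M. Ru \cite{R} for the special case when the field $K=\C$,'' and no argument is supplied afterward. So there is no in-paper proof to compare your proposal against.

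As for the proposal itself, your first two stages (reduction to nonnegative integral $\mathbf{c}$ with $\min_i c_i=0$ by homogeneity, continuity, and translation invariance; and the identification of $e_X(\mathbf{c})$ with the leading asymptotic of $S_X(m,\mathbf{c})$ via Mumford's formula) are correct and are indeed the preliminary moves in the Evertse--Ferretti argument. The third stage, however, is only a strategy, not a proof: you say the inductive hyperplane-section scheme ``should yield'' a deviation bounded by a constant times $\frac{\Delta}{m}\max_i c_i$ with the right constant $(2k+1)$, but you do not carry out the induction, do not specify the regularity bound you invoke, and do not show how the errors accumulate to produce exactly $2k+1$. In the original proof the constant comes from a careful bookkeeping of the Hilbert-weight differences $S_X(m,\mathbf{c})-S_{X\cap H}(m,\mathbf{c})$ and an explicit bound on $H_X(m)$ valid for $m>\Delta$ (this is where the hypothesis $m>\Delta$ is actually used, via $H_X(m)\le\Delta\binom{m+k}{k}$ and a matching lower bound); none of that is present here. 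So while your outline is pointed in the right direction and matches the architecture of the source, the substantive step remains undone.
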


The following lemma is due to J. Evertse and R. Ferretti \cite{EF02} for the case of the field $\mathbb Q^p$ and reproved by M. Ru \cite{R} for the case of the field $\C$.
\begin{lemma}[{Lemma 3.2 \cite{R}, see also Lemma 5.1 \cite{EF02}}]\label{2.2}
Let $Y$ be a subvariety of $\P^{q-1}(\C)$ of dimension $k$ and degree $\Delta$. Let ${\bf c}=(c_1,\ldots, c_q)$ be a tuple of positive reals. Let $\{i_0,\ldots,i_k\}$ be a subset of $\{1,\ldots,q\}$ such that
$$Y \cap \{y_{i_0}=\cdots =y_{i_k}=0\}=\emptyset.$$
Then
$$e_Y({\bf c})\ge (c_{i_0}+\cdots +c_{i_k})\Delta.$$
\end{lemma}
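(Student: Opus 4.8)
The plan is to exploit the expression, recorded above, of the Chow form $F_Y$ as a homogeneous polynomial of degree $\Delta$ in the brackets $[J]$, combined with a specialization of the block variables to the standard coordinate vectors attached to the index set $J_0=\{i_0,\ldots,i_k\}$. Throughout, write $w(M)$ for the $t$-weight picked up by a bracket-monomial $M$ under the substitution $u_{sj}\mapsto t^{c_j}u_{sj}$.

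First I would write $F_Y$ as a $\C$-linear combination of bracket-monomials, each one a product of exactly $\Delta$ brackets $[J]$ with $J$ ranging over the $(k+1)$-subsets of $\{1,\ldots,q\}$; this degree count is forced, since each bracket is multilinear, i.e. of degree one in each of the $k+1$ blocks, so a product of $\Delta$ brackets is the only way to reach degree $\Delta$ in every block. By the bracket transformation law stated above, under the substitution $u_{sj}\mapsto t^{c_j}u_{sj}$ each factor $[J]$ acquires $t^{\sum_{j\in J}c_j}$, so a monomial $\prod_{s=1}^{\Delta}[J_s]$ acquires $t^{\,\sum_{s}\sum_{j\in J_s}c_j}$. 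Grouping the resulting terms by the power of $t$ reproduces exactly the decomposition defining the $e_i$ and $G_i$, and $e_Y({\bf c})=e_0$ is the largest weight $w$ for which the coefficient polynomial $G$ in the $u$-variables is not identically zero.

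Next I would single out the monomial $[J_0]^{\Delta}$, which carries weight $w_0:=\Delta\,(c_{i_0}+\cdots+c_{i_k})$, and establish that the full coefficient polynomial at level $w_0$ is not identically zero. This rests on one specialization: set the $s$-th block ${\bf u}_s$ equal to the standard basis vector ${\bf e}_{i_s}$ for $s=0,\ldots,k$. A short determinant computation shows that at this point the entry of block ${\bf u}_s$ in coordinate $l$ is $\delta_{l,i_s}$, so $[J]=\det(u_{s\,j_t})$ vanishes unless $J=J_0$, in which case $[J_0]=\pm1$. Hence every bracket-monomial other than $[J_0]^{\Delta}$ vanishes at this point, and $F_Y$ evaluated here equals $\pm(\text{coefficient of }[J_0]^{\Delta})$. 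But that value is precisely $F_Y({\bf e}_{i_0},\ldots,{\bf e}_{i_k})$, which is nonzero by the defining property of the Chow form together with the hypothesis $Y\cap\{y_{i_0}=\cdots=y_{i_k}=0\}=\emptyset$, since the hyperplane $H_{{\bf e}_{i_s}}$ is exactly $\{y_{i_s}=0\}$.

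Finally I would conclude by contradiction. Suppose $e_0<w_0$; then in particular the coefficient polynomial at level $w_0$ is identically zero. Yet that polynomial is the sum of all bracket-monomials of $t$-weight $w_0$, and evaluating it at ${\bf u}_s={\bf e}_{i_s}$ leaves only the $[J_0]^{\Delta}$ term, yielding the nonzero value computed above, a contradiction. Therefore $e_Y({\bf c})=e_0\ge w_0=(c_{i_0}+\cdots+c_{i_k})\Delta$, as claimed. The one delicate point to guard against is cancellation among distinct bracket-monomials sharing the weight $w_0$; the coordinate-vector specialization is exactly what defeats this, since it annihilates every such monomial except $[J_0]^{\Delta}$ and thereby certifies that level $w_0$ genuinely survives in the decomposition.
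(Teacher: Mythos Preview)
Your argument is correct. Note, however, that the present paper does not supply its own proof of this lemma: it is quoted verbatim from Ru~\cite{R} (Lemma~3.2) and Evertse--Ferretti~\cite{EF02} (Lemma~5.1), so there is no in-paper proof to compare against. What you have written is essentially the standard route taken in those references: express the Chow form in bracket monomials, observe that the pure monomial $[J_0]^{\Delta}$ carries $t$-weight exactly $\Delta(c_{i_0}+\cdots+c_{i_k})$, and certify that this weight level genuinely occurs in the decomposition by specializing the block variables to the coordinate vectors ${\bf e}_{i_0},\ldots,{\bf e}_{i_k}$. That specialization annihilates every bracket other than $[J_0]$ and reduces the question to the nonvanishing $F_Y({\bf e}_{i_0},\ldots,{\bf e}_{i_k})\neq 0$, which is exactly the defining property of the Chow form combined with the hypothesis $Y\cap\{y_{i_0}=\cdots=y_{i_k}=0\}=\emptyset$.

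One small point worth making explicit for completeness: the expression of $F_Y$ as a polynomial in the brackets is not unique, since the $[J]$ satisfy Pl\"ucker relations, so ``the coefficient of $[J_0]^{\Delta}$'' is a priori ambiguous. Your argument is unaffected by this, because you may fix one such expression once and for all; the identity $\sum_i t^{e_i}G_i(u)=\sum_M c_M\,t^{w(M)}M(u)$ then lets you read off each $G_i$ as the sub-sum of bracket monomials of weight $e_i$ in that fixed expression, and the evaluation at ${\bf u}_s={\bf e}_{i_s}$ is of course independent of the choice.
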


\section{Non-integrated defect relation for nondegenerate mappings sharing hypersurfaces in subgeneral position}

First of all, we need the following lemmas from \cite{Q16-1,Q16-2}. 
\begin{lemma}[{see Lemma 3.1 \cite{Q16-1} and Lemma 3.1 \cite{Q16-2}}]\label{3.1}
Let $V$ be a projective subvariety of $\P^n(\C)$ of dimension $k$. Let $Q_1,\ldots,Q_{N+1}$ be hypersurfaces in $\P^n(\C)$ of the same degree $d\ge 1$, such that
$$\left (\bigcap_{i=1}^{N+1}Q_i\right )\cap V=\emptyset.$$
Then there exist $k$ hypersurfaces $P_{2},\ldots,P_{k+1}$ of the forms
$$P_t=\sum_{j=2}^{N-k+t}c_{tj}Q_j, \ c_{tj}\in\C,\ t=2,\ldots,k+1,$$
such that $\left (\bigcap_{t=1}^{k+1}P_t\right )\cap V=\emptyset,$ where $P_1=Q_1$.
\end{lemma}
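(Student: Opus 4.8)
The plan is to construct the hypersurfaces $P_2,\dots,P_{k+1}$ one at a time by an inductive procedure, at each step choosing the coefficients $c_{tj}$ generically so as to kill one more dimension of the intersection with $V$. Set $P_1=Q_1$. Suppose inductively that for some $t$ with $1\le t\le k$ we have already produced $P_1,\dots,P_t$ (with $P_s=\sum_{j=2}^{N-k+s}c_{sj}Q_j$ for $2\le s\le t$, and $P_1=Q_1$) such that $Z_t:=\left(\bigcap_{s=1}^{t}P_s\right)\cap V$ has dimension $\le k-t$. For $t=1$ this holds because $Z_1 = (Q_1\cap V)$ is a hypersurface section of the $k$-dimensional variety $V$, hence has dimension $\le k-1$ (we may assume $V\not\subset Q_1$, which is forced by the hypothesis $\left(\bigcap_{i=1}^{N+1}Q_i\right)\cap V=\emptyset$ — if $V\subseteq Q_i$ for all $i$ the intersection would be $V\ne\emptyset$, and in any case one can relabel so that at least the needed $Q_j$'s do not contain $V$; this bookkeeping is the sort of routine point I would spell out carefully).

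The inductive step is the heart of the argument. We want to choose $P_{t+1}=\sum_{j=2}^{N-k+t+1}c_{(t+1)j}Q_j$ so that $\dim\left(Z_t\cap P_{t+1}\right)\le k-t-1$, i.e. so that $P_{t+1}$ does not entirely contain any irreducible component of $Z_t$. Let $W_1,\dots,W_s$ be the irreducible components of $Z_t$; each has dimension $\le k-t$. The key claim is that the hypersurfaces $Q_2,\dots,Q_{N-k+t+1}$ have no common component with any $W_i$, i.e. for each $i$ there is some $j\in\{2,\dots,N-k+t+1\}$ with $W_i\not\subseteq Q_j$. Grant this claim for a moment. Then for each fixed $i$, the set of tuples $(c_{(t+1)j})_j$ for which $W_i\subseteq P_{t+1}$ is a proper linear subspace of the coefficient space: indeed if $W_i\subseteq P_{t+1}$ for two such tuples then their difference also gives a form vanishing on $W_i$, and since some $Q_j$ does not vanish on $W_i$ the set of "bad" tuples is a proper linear (hence measure-zero, Zariski-closed) subspace. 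Taking a tuple outside the finite union of these proper subspaces over $i=1,\dots,s$ yields $P_{t+1}$ with $\dim(Z_{t+1})\le k-t-1$. After $k$ steps we reach $Z_{k+1}$ of dimension $\le -1$, i.e. $\bigcap_{s=1}^{k+1}P_s\cap V=\emptyset$, as desired.

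It remains to justify the key claim, which I expect to be the main obstacle and the place where the hypothesis $\left(\bigcap_{i=1}^{N+1}Q_i\right)\cap V=\emptyset$ is really used. Fix an irreducible component $W_i$ of $Z_t$. Because $W_i\subseteq Z_t\subseteq Q_1\cap\cdots$, and more to the point $W_i\subseteq V$, suppose toward a contradiction that $W_i\subseteq Q_j$ for every $j\in\{2,\dots,N-k+t+1\}$. Since $W_i$ already lies in $P_1=Q_1$ and in $P_2,\dots,P_t$ (each a combination of $Q_2,\dots$), we would then have $W_i$ contained in $Q_1,Q_2,\dots,Q_{N-k+t+1}$ simultaneously. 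The number of these hypersurfaces is $1+(N-k+t)$; for $t\le k$ this is $\le N+1$... but we need exactly that the total count of distinct $Q$'s forced to contain $W_i$, together with the requirement $W_i\subseteq V$ and $W_i\ne\emptyset$, violates $N$-subgeneral position. The careful accounting is: $W_i$ nonempty forces $Q_1\cap\dots\cap Q_{N-k+t+1}\cap V\ne\emptyset$; when $t=k$ this is $Q_1\cap\dots\cap Q_{N+1}\cap V\ne\emptyset$, contradicting the hypothesis directly, and for smaller $t$ one must instead run the genericity argument using that among $Q_2,\dots,Q_{N-k+t+1}$ one can always avoid containing $W_i$ — equivalently, carry the dimension bound and the non-containment property jointly through the induction, strengthening the inductive hypothesis to: no component of $Z_t$ is contained in all of $Q_2,\dots,Q_{N-k+t+1}$. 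Verifying that this strengthened hypothesis propagates, using that $P_{t+1}$ is a \emph{generic} combination of one \emph{additional} hypersurface $Q_{N-k+t+1}$ beyond those used before, is the delicate bookkeeping step; the count $N-k+t$ of available hypersurfaces at stage $t$ is exactly calibrated so that at the final stage $t=k$ we have used $Q_1,\dots,Q_{N+1}$ and the empty-intersection hypothesis closes the argument.
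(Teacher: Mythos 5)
The paper does not prove Lemma \ref{3.1}; it is quoted from \cite{Q16-1}, so your attempt has to be measured against the argument given there. Its skeleton --- an inductive construction of $P_2,\dots,P_{k+1}$ in which the coefficient tuple of $P_{t+1}$ is chosen outside a finite union of proper linear subspaces so that $P_{t+1}$ avoids the relevant components of $Z_t=\bigl(\bigcap_{s=1}^{t}P_s\bigr)\cap V$ --- is exactly what you set up, and your observation that the ``bad'' tuples for a fixed component form a proper linear subspace as soon as some $Q_j$ does not vanish identically on that component is correct. The genuine gap is the inductive step, which you flag but do not close: your key claim (for every component $W_i$ of $Z_t$ there is $j\in\{2,\dots,N-k+t+1\}$ with $W_i\not\subset Q_j$) is verified only at $t=k$, and the repair you suggest --- strengthening the induction to ``no component of $Z_t$ lies in all of $Q_2,\dots,Q_{N-k+t+1}$'' --- is not a consequence of the hypothesis: a component of dimension smaller than $k-t$ (say a point) may lie on $Q_1,\dots,Q_{N-k+t+1}$ without lying on all $N+1$ hypersurfaces, and you give no argument that this stronger property propagates.

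The missing idea, which is how the cited proof works, is a dimension count that makes any strengthening unnecessary. Carry only the bound $\dim Z_t\le k-t$, and note that $P_{t+1}$ needs to avoid only the components of $Z_t$ of maximal dimension $k-t$: components of smaller dimension contribute at most $k-t-1$ to $Z_{t+1}$ even if $P_{t+1}$ contains them. If a component $\Gamma$ with $\dim\Gamma=k-t$ were contained in every $Q_j$, $j=2,\dots,N-k+t+1$, then, since also $\Gamma\subset P_1=Q_1$ and $\Gamma\subset V$, intersecting $\Gamma$ with the $k-t$ remaining hypersurfaces $Q_{N-k+t+2},\dots,Q_{N+1}$ would be nonempty by the projective dimension theorem, producing a point of $\bigl(\bigcap_{i=1}^{N+1}Q_i\bigr)\cap V$ and contradicting the hypothesis. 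This is the calibration of the count $N-k+t$ that you allude to, but it has to be run at every stage $t$, not just the last one. A further small defect: the hypothesis does not force $V\not\subset Q_1$ (it only forbids all $N+1$ hypersurfaces from containing $V$), and relabelling is not available because the conclusion singles out $Q_1$ and fixes the order of the indices; the assumption that no $Q_i$ contains $V$ (true in the situation where the lemma is applied) has to be treated as implicit in the statement, and your base case should say so rather than claim it is forced by the hypothesis.
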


Let $f:M\rightarrow \mathbb P^n(\mathbb C)$ be a meromorphic mapping with a reduced representation $\tilde f=(f_0,\ldots ,f_n)$. We define
$$ Q_i(\tilde f)=\sum_{I\in\mathcal I_d}a_{iI}f^I ,$$
where $f^I=f_0^{i_0}\cdots f_n^{i_n}$ for $I=(i_0,\ldots,i_n)$. Then we can consider $f^*Q_i=\nu_{Q_i(\tilde f)}$ as divisors. We now have the following.

\begin{lemma}[{see Lemma 3.2 \cite{Q16-2}}]\label{3.2}
Let $\{Q_i\}_{i\in R}$ be a family of hypersurfaces in $\mathbb P^n(\mathbb C)$ of the common degree $d$ and let $f$ be a meromorphic mapping of $\mathbb B^m(\C)$ into $\mathbb P^n(\mathbb C)$. Assume that $\bigcap_{i\in R}Q_i=\emptyset$. Then, there exist positive constants $\alpha$ and $\beta$ such that
$$\alpha ||\tilde f||^d \le  \max_{i\in R}|Q_i(\tilde f)|\le \beta ||\tilde f||^d.$$
\end{lemma}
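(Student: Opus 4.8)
\textbf{Proof proposal for Lemma \ref{3.2}.}

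The plan is to prove the two inequalities separately, obtaining the upper bound by a crude triangle-inequality estimate and the lower bound by a compactness argument on projective space. For the upper bound, write $Q_i(\tilde f)=\sum_{I\in\mathcal I_d}a_{iI}f^I$, so that by the Cauchy--Schwarz inequality (or simply the triangle inequality together with $|f^I|\le \|\tilde f\|^d$ for every $I\in\mathcal I_d$), we get $|Q_i(\tilde f)|\le \left(\sum_{I\in\mathcal I_d}|a_{iI}|\right)\|\tilde f\|^d$ pointwise. Taking $\beta:=\max_{i\in R}\sum_{I\in\mathcal I_d}|a_{iI}|$ — which is finite since $R$ is finite (the family is a finite collection of hypersurfaces) — yields $\max_{i\in R}|Q_i(\tilde f)|\le\beta\|\tilde f\|^d$.

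For the lower bound, I would argue by contradiction at the level of points in $\P^n(\C)$, exploiting the hypothesis $\bigcap_{i\in R}Q_i=\emptyset$. Consider the continuous function $g:\P^n(\C)\to\R_{\ge 0}$ defined on a point with homogeneous coordinates $w=(w_0:\cdots:w_n)$ by $g(w):=\max_{i\in R}|Q_i(w)|/\|w\|^d$; this is well-defined (independent of the choice of representative $w$) and continuous because each $Q_i$ is homogeneous of degree $d$. Since $\bigcap_{i\in R}Q_i=\emptyset$, for every $w\in\P^n(\C)$ at least one $Q_i(w)\ne 0$, so $g(w)>0$ everywhere; as $\P^n(\C)$ is compact, $g$ attains a positive minimum $\alpha>0$. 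Applying this with $w=\tilde f(z)$ (valid at any $z$ where $\tilde f(z)\ne 0$, and the inequality extends by continuity, or one notes the reduced representation never vanishes identically and the estimate is trivially satisfied where both sides vanish appropriately) gives $\alpha\|\tilde f\|^d\le\max_{i\in R}|Q_i(\tilde f)|$ pointwise on $\C^m$.

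I do not anticipate a serious obstacle here; the only points requiring mild care are (i) confirming that the family $\{Q_i\}_{i\in R}$ is finite so that $\beta$ is a genuine maximum rather than a supremum that might be infinite, and (ii) checking that the definition of $g$ on $\P^n(\C)$ is legitimate — i.e., that $\max_{i\in R}|Q_i(w)|/\|w\|^d$ is invariant under rescaling $w\mapsto\lambda w$, which holds precisely because all the $Q_i$ share the common degree $d$ (so both numerator and denominator scale by $|\lambda|^d$). This common-degree hypothesis is exactly what makes the compactness argument on $\P^n(\C)$ go through, and it is the one place where the structure of the statement is genuinely used.
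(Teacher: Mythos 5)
Your proof is correct and follows essentially the same route as the paper: the key step, establishing the lower bound via the continuous, everywhere-positive function $\max_{i\in R}|Q_i(w)|/\|w\|^{d}$ on the compact space $\P^n(\C)$ (positivity coming precisely from $\bigcap_{i\in R}Q_i=\emptyset$), is exactly the paper's argument, which in fact obtains $\beta$ as the maximum of that same function. Your use of the triangle inequality for the upper bound is an inessential variation, and your side remarks (finiteness of $R$, homogeneity making the quotient well defined on $\P^n(\C)$) are the same implicit points the paper relies on.
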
 

\begin{lemma}[{see Lemma 3.3 \cite{Q16-2}}]\label{3.3}
Let $\{L_i\}_{i=1}^{u}$ be a family of hypersurfaces in $\mathbb P^n(\mathbb C)$ of the common degree $d$ and let $f$ be a meromorphic mapping of $\B^m(R_0)\subset\mathbb C^m$ into $\mathbb P^n(\mathbb C)$, where $u=\binom{n+d}{n}$. Assume that $\{L_i\}_{i=1}^{u}$ are linearly independent. Then, for every $0<r_0<r<R_0$, we have
$$ T_F(r,r_0)=dT_f(r,r_0)+O(1), $$
where $F$ is the meromorphic mapping of $\B^m(R_0)$ into $\P^{u-1}(\C)$ defined by the representation $F=(L_1(\tilde f):\cdots :L_{u}(\tilde f))$.
\end{lemma}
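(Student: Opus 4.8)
The plan is to reduce the statement of Lemma~\ref{3.3} to an application of the First Main Theorem together with the fact that composing $f$ with a linear embedding of the $d$-uple (Veronese) type changes the characteristic function by a factor of $d$ plus a bounded term. First I would observe that, since $\{L_1,\dots,L_u\}$ are linearly independent homogeneous polynomials of degree $d$ in $n+1$ variables and $u=\binom{n+d}{n}=\dim\C[x_0,\dots,x_n]_d$, they form a basis of the space of all degree-$d$ forms. Hence the map $\Phi:\P^n(\C)\to\P^{u-1}(\C)$ given in homogeneous coordinates by $\Phi(x)=(L_1(x):\cdots:L_u(x))$ is, up to a linear automorphism of $\P^{u-1}(\C)$, exactly the classical Veronese embedding $v_d$ of degree $d$, and in particular $F=\Phi\circ f$ with reduced representation obtained from $\tilde F=(L_1(\tilde f),\dots,L_u(\tilde f))$ after removing the common factor (which is trivial, i.e.\ of codimension $\ge 2$, because the $L_i$ have no common zero on $\P^n$ other than the origin).

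Next I would compare the characteristic functions directly. Pick any one index, say $i_0$, and regard $L_{i_0}$ as a hypersurface of degree $d$ in $\P^n(\C)$ with $f(M)\not\subset L_{i_0}$. By the First Main Theorem applied to $f$ and $L_{i_0}$ we have $dT_f(r,r_0)=m_f(r,r_0,L_{i_0})+N(r,r_0,f^*L_{i_0})+O(1)$, and at the same time $L_{i_0}$ is just one of the coordinate hyperplanes $\{y_{i_0}=0\}$ of $\P^{u-1}(\C)$ pulled back by $F$, so $T_F(r,r_0)=m_F(r,r_0,\{y_{i_0}=0\})+N(r,r_0,F^*\{y_{i_0}=0\})+O(1)$ by the FMT for hyperplanes. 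Since $F^*\{y_{i_0}=0\}=f^*L_{i_0}$ as divisors and, using Lemma~\ref{3.2} with the family $\{L_i\}_{i=1}^u$ (which has empty common intersection, being a basis), the two proximity terms differ by $O(1)$ because $\log\frac{\|\tilde f\|^d}{|L_{i_0}(\tilde f)|}$ and $\log\frac{\|\tilde F\|}{|L_{i_0}(\tilde f)|}$ differ by $\log\frac{\|\tilde f\|^d}{\|\tilde F\|}$, which is bounded above and below by Lemma~\ref{3.2}. Therefore $T_F(r,r_0)=dT_f(r,r_0)+O(1)$, as claimed.

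Alternatively, and perhaps more cleanly, I would work with the integral definition $T_F(r,r_0)=\int_{r_0}^r\frac{dt}{t^{2m-1}}\int_{\B^m(t)}F^*\Omega_{u-1}\wedge v^{m-1}$ and invoke Jensen's formula to write $T_F(r,r_0)=\int_{S(r)}\log\|\tilde F\|\,\sigma_m-\int_{S(r_0)}\log\|\tilde F\|\,\sigma_m+O(1)$, and similarly $T_f(r,r_0)=\int_{S(r)}\log\|\tilde f\|\,\sigma_m-\int_{S(r_0)}\log\|\tilde f\|\,\sigma_m+O(1)$; then subtract $d$ times the second from the first. The difference of the integrands is $\log\|\tilde F\|-d\log\|\tilde f\|=\log\frac{\|\tilde F\|}{\|\tilde f\|^d}$, and Lemma~\ref{3.2}, applied to the family $\{L_i\}_{i=1}^u$ on $\mathbb C^m$, shows that $\log\alpha\le\log\frac{\max_i|L_i(\tilde f)|}{\|\tilde f\|^d}$ and $\frac{\max_i|L_i(\tilde f)|}{\|\tilde f\|^d}\le\beta$, while $\max_i|L_i(\tilde f)|\le\|\tilde F\|\le\sqrt{u}\,\max_i|L_i(\tilde f)|$; hence $\log\frac{\|\tilde F\|}{\|\tilde f\|^d}$ is globally bounded, so its integrals over $S(r)$ and $S(r_0)$ against the probability measure $\sigma_m$ are $O(1)$. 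This yields $T_F(r,r_0)=dT_f(r,r_0)+O(1)$.

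The only genuinely delicate point is verifying that $\tilde F=(L_1(\tilde f),\dots,L_u(\tilde f))$ is (up to a trivial common holomorphic factor) a reduced representation of $F$, i.e.\ that the $L_i(\tilde f)$ do not share a common zero divisor; this follows because the $L_i$ have no common zero in $\P^n(\C)$, so at any point $z$ where $\tilde f(z)\ne 0$ at least one $L_i(\tilde f)(z)\ne 0$, and the locus $\{\tilde f=0\}$ has codimension $\ge 2$ by the definition of a reduced representation. Everything else is a routine bookkeeping of $O(1)$ terms via Jensen's formula and Lemma~\ref{3.2}, so I expect no serious obstacle; the main care needed is simply to keep track of the base point $r_0$ and the constants in Lemma~\ref{3.2} so that all error terms are honestly independent of $r$.
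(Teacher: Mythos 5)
Your proposal is correct and follows essentially the route the paper intends: the paper gives no written proof beyond the remark ``By Jensen's formula,'' and your second argument --- Jensen's formula for $T_F$ and $T_f$ combined with the global boundedness of $\log\bigl(\Vert\tilde F\Vert/\Vert\tilde f\Vert^{d}\bigr)$ via Lemma \ref{3.2} (the $L_i$ having empty common zero locus since they span all degree-$d$ forms), plus the check that $\tilde F$ is a reduced representation --- is exactly that argument. Your first variant via the First Main Theorem is just a repackaging of the same estimates, so there is nothing genuinely different to compare.
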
 

\begin{proof}[{\sc Proof of Theorem \ref{1.1}}]
By using the universal covering if necessary, we may assume that $M=\mathbb{B}^m(R_0)$. Replacing $Q_i$ by $Q_i^{d/d_j} \ (j=1,\ldots, q)$ if necessary, we may assume that all hypersurfaces $Q_i\ (1\le i\le q)$ are of the same degree $d$. 
We denote by $\mathcal I$ the set of all permutations of the set $\{1,\ldots.,q\}$. Denote by $n_0$ the cardinality of $\mathcal I$. Then we have $n_0=q!$, and we may write that
$\mathcal I=\{I_1,\ldots,I_{n_0}\}$
where $I_i=(I_i(1),\ldots,I_i(q))\in\mathbb N^q$ and $I_1<I_2<\cdots <I_q$ in the lexicographic order.

For each $I_i\in\mathcal I$, we denote by $P_{i,1},\ldots,P_{i,{k+1}}$ the hypersurfaces obtained in Lemma \ref{3.1} with respect to the hypersurfaces $Q_{I_i(1)},\ldots,Q_{I_i(N+1)}$. It is easy to see that there exists a positive constant $B\ge 1$, which is chosen common for all $I_i\in\mathcal I$, such that
$$ |P_{i,t}(\omega)|\le B\max_{1\le j\le N-k+t}|Q_{I_i(j)}(\omega)|, $$
for all $1\le t\le k+1$ and for all $\omega=(\omega_0,\ldots,\omega_n)\in\C^{n+1}$. 

Consider a reduced representation $\tilde f=(f_0,\ldots ,f_n): \mathbb{B}^m(R_0) \rightarrow \C^{n+1}$ of $f$. Fix an element $I_i\in\mathcal I$. We denote by $S(i)$ the set of all points $z\in \mathbb{B}^m(R_0) \setminus\bigcup_{i=1}^qQ_i(\tilde f)^{-1}(\{0\})$ such that
$$ |Q_{I_i(1)}(\tilde f)(z)|\le |Q_{I_i(2)}(\tilde f)(z)|\le\cdots\le |Q_{I_i(q)}(\tilde f)(z)|.$$
Since $Q_{1},\ldots,Q_{q}$ are in $N$-subgeneral position in $V$, by Lemma \ref{3.2}, there exists a positive constant $A$, which is chosen common for all $I_i$, such that
$$ ||\tilde f (z)||^d\le A\max_{1\le j\le N+1}|Q_{I_i(j)}(\tilde f)(z)|\ \forall z\in S(i). $$
Therefore, for $z\in S(i)$ we have
\begin{align*}
\prod_{i=1}^q\dfrac{||\tilde f (z)||^d}{|Q_i(\tilde f)(z)|}&\le A^{q-N-1}\prod_{j=1}^{N+1}\dfrac{||\tilde f (z)||^d}{|Q_{I_i(j)}(\tilde f)(z)|}\\
&\le A^{q-N-1}B^{k}\dfrac{||\tilde f (z)||^{(N+1)d}}{\bigl (\prod_{j=1}^{N+1-k}|Q_{I_i(j)}(\tilde f)(z)|\bigl )\cdot\prod_{j=2}^{k+1}|P_{i,j}(\tilde f)(z)|}\\
&\le A^{q-N-1}B^{k}\dfrac{||\tilde f (z)||^{(N+1)d}}{|P_{i,1}(\tilde f)(z)|^{N-k+1}\cdot\prod_{j=2}^{k+1}|P_{i,j}(\tilde f)(z)|}\\
&\le A^{q-N-1}B^{k}C^{(N-k)}\dfrac{||\tilde f (z)||^{(N+1)d+(N-k)kd}}{\prod_{j=1}^{k+1}|P_{i,j}(\tilde f)(z)|^{N-k+1}},
\end{align*}
where $C$ is a positive constant, which is chosen common for all $I_i\in\mathcal I$, such that 
$$|P_{i,j}(\omega)|\le C||\omega||^d, \ \forall \omega=(\omega_0,\ldots,\omega_n)\in\C^{n+1}.$$
The above inequality implies that
\begin{align}\label{3.4}
\log \prod_{i=1}^q\dfrac{||\tilde f (z)||^d}{|Q_i(\tilde f)(z)|}\le \log(A^{q-N-1}B^{k}C^{(N-k)})+(N-k+1)\log \dfrac{||\tilde f (z)||^{(k+1)d}}{\prod_{j=1}^{k+1}|P_{i,j}(\tilde f)(z)|}.
\end{align}

We consider the mapping $\Phi$ from $V$ into $\P^{l-1}(\C)\ (l=n_0(k+1))$, which maps a point $x\in V$ into the point $\Phi(x)\in\P^{l-1}(\C)$ given by
$$\Phi(x)=(P_{1,1}(x):\cdots : P_{1,k+1}(x):P_{2,1}(x):\cdots:P_{2,k+1}(x):\cdots:P_{n_0,1}(x):\cdots :P_{n_0,k+1}(x)).$$ 

Let $Y=\Phi (V)$. Since $V\cap\bigcap_{j=1}^{N+1}P_{1,j}=\emptyset$, $\Phi$ is a finite morphism on $V$ and $Y$ is a complex projective subvariety of $\P^{l-1}(\C)$ with $\dim Y=k$ and $\Delta:=\deg Y\le d^k.\deg V$. 
For every 
$${\bf a} = (a_{1,1},\ldots ,a_{1,k+1},a_{2,1}\ldots,a_{2,k+1},\ldots,a_{n_0,1},\ldots,a_{n_0,k+1})\in\mathbb Z^l_{\ge 0}$$ 
and
$${\bf y} = (y_{1,1},\ldots ,y_{1,k+1},y_{2,1}\ldots,y_{2,k+1},\ldots,y_{n_0,1},\ldots,y_{n_0,k+1})$$ 
we denote ${\bf y}^{\bf a} = y_{1,1}^{a_{1,1}}\ldots y_{1,k+1}^{a_{1,k+1}}\ldots y_{n_0,1}^{a_{n_0,1}}\ldots y_{n_0,k+1}^{a_{n_0,k+1}}$. Let $u$ be a positive integer. We set
\begin{align*}
n_u:=H_Y(u)-1,\ l_u:=\binom{l+u-1}{u}-1,
\end{align*}
and define the space
$$ Y_u=\C[y_1,\ldots,y_l]_u/(I_Y)_u, $$
which is a vector space of dimension $n_u+1$. We fix a basis $\{v_0,\ldots, v_{n_u}\}$ of $Y_u$ and consider the meromorphic mapping $F$ with a reduced representation
$$ \tilde F=(v_0(\Phi\circ \tilde f),\ldots ,v_{n_u}(\Phi\circ \tilde f)): \mathbb{B}^m(R_0) \rightarrow \C^{n_u+1}. $$
Hence $F$ is linearly nondegenerate, since $f$ is algebraically nondegenerate.

Then there exists an admissible set $\alpha=(\alpha_0,\ldots,\alpha_{n_u})\in(\mathbb{Z}^m_+)^{n_u+1}$ such that
$$W^\alpha(F_0,\ldots,F_{n_u})=\det (D^{\alpha_i}(v_s(\Phi\circ \tilde f)))_{0\le i,s\le n_u}\neq 0.$$

Now, we fix an index $i\in\{1,\ldots,n_0\}$ and a point $z\in S(i)$. We define 
$${\bf c}_z = (c_{1,1,z},\ldots,c_{1,k+1,z},c_{2,1,z},\ldots,c_{2,k+1,z},\ldots,c_{n_0,1,z},\ldots,c_{n_0,k+1,z})\in\mathbb Z^{l},$$ 
where
\begin{align*}
c_{i,j,z}:=\log\dfrac{||\tilde f(z)||^d||P_{i,j}||}{|P_{i,j}(\tilde f)(z)|}\text{ for } i=1,\ldots,n_0 \text{ and }j=1,\ldots,k+1.
\end{align*}
We see that $c_{i,j,z}\ge 0$ for all $i$ and $j$. By the definition of the Hilbert weight, there are ${\bf a}_{1,z},\ldots,{\bf a}_{H_Y(u),z}\in\mathbb N^{l}$ with
$$ {\bf a}_{i,z}=(a_{i,1,1,z},\ldots,a_{i,1,k+1,z},\ldots,a_{i,n_0,1,z},\ldots,a_{i,n_0,k+1,z}),$$ where $a_{i,j,s,z}\in\{1,\ldots,l_u\}, $
 such that the residue classes modulo $(I_Y)_u$ of ${\bf y}^{{\bf a}_{1,z}},\ldots,{\bf y}^{{\bf a}_{H_Y(u),z}}$ form a basic of $\C[y_1,\ldots,y_l]_u/(I_Y)_u$ and
\begin{align*}
S_Y(u,{\bf c}_z)=\sum_{i=1}^{H_Y(u)}{\bf a}_{i,z}\cdot{\bf c}_z.
\end{align*}
We see that ${\bf y}^{{\bf a}_{i,z}}\in Y_u$ (modulo $(I_Y)_u$). Then we may write
$$ {\bf y}^{{\bf a}_{i,z}}=L_{i,z}(v_0,\ldots ,v_{n_u}), $$ 
where $L_{i,z}\ (1\le i\le H_Y(u))$ are independent linear forms.
We have
\begin{align*}
\log\prod_{i=1}^{H_Y(u)} |L_{i,z}(\tilde F(z))|&=\log\prod_{i=1}^{H_Y(u)}\prod_{\overset{1\le t\le n_0}{1\le j\le k+1}}|P_{t,j}(\tilde f(z))|^{a_{i,t,j,z}}\\
&=-S_Y(u,{\bf c}_z)+duH_Y(u)\log ||\tilde f(z)|| +O(uH_Y(u)).
\end{align*}
Therefore
\begin{align}\label{3.5}
S_Y(u,{\bf c}_z) = \log\prod_{i=1}^{H_Y(u)}\dfrac{1}{|L_{i,z}(\tilde F(z))|} + duH_Y(u)\log ||\tilde f(z)||+O(uH_Y(u)).
\end{align}
From Theorem \ref{2.1} we have
\begin{align}\label{3.6}
\dfrac{1}{uH_Y(u)}S_Y(u,{\bf c}_z)\ge&\dfrac{1}{(k+1)\Delta}e_Y({\bf c}_z)-\dfrac{(2k+1)\Delta}{u}\max_{\underset{1\le i\le n_0}{1\le j\le k+1}}c_{i,j,z}
\end{align}
We chose an index $i_0$ such that $z\in S(i_0)$. It is clear that
\begin{align*}
\max_{\underset{1\le i\le n_0}{1\le j\le k+1}}c_{i,j,z}\le \sum_{1\le j\le k+1}\log\dfrac{||\tilde f(z)||^d||P_{i_0,j}||}{|P_{i_0,j}(\tilde f)(z)|}+O(1),
\end{align*}
where the term $O(1)$ does not depend on $z$ and $i_0$.
Combining (\ref{3.5}), (\ref{3.6}) and the above remark, we get
\begin{align}\nonumber
\dfrac{1}{(k+1)\Delta}e_Y({\bf c}_z)\le & \dfrac{1}{uH_Y(u)}\log\prod_{i=1}^{H_Y(u)}\dfrac{1}{|L_{i,z}(\tilde F(z))|} + d\log ||\tilde f(z)|| \\
\label{3.7}
\begin{split}
&+\dfrac{(2k+1)\Delta}{u}\sum_{\underset{1\le i\le n_0}{1\le j\le k+1}}\log\dfrac{||\tilde f(z)||^d||P_{i_0,j}||}{|P_{i_0,j}(\tilde f)(z)|}+O(1/u).
\end{split}
\end{align}
Since $P_{i_0,1},\ldots,P_{i_0,k+1}$ are in general with respect to $Y$, By Lemma \ref{2.2}, we have
\begin{align}\label{3.8}
e_Y({\bf c}_z)\ge (c_{i_0,1,z}+\cdots +c_{i_0,k+1,z})\cdot\Delta =\left (\sum_{1\le j\le k}\log\dfrac{||\tilde f(z)||^d||P_{i_0,j}||}{|P_{i_0,j}(\tilde f)(z)|}\right )\cdot\Delta.
\end{align}
Then, from (\ref{3.4}), (\ref{3.7}) and (\ref{3.8}) we have
\begin{align}\label{3.9}
\begin{split}
\dfrac{1}{N-k+1}\log \prod_{i=1}^q\dfrac{||\tilde f (z)||^d}{|Q_i(\tilde f)(z)|}\le & \dfrac{k+1}{uH_Y(u)}\log\prod_{i=1}^{H_Y(u)}\dfrac{1}{|L_{i,z}(\tilde F(z))|}+d(k+1)\log ||\tilde f(z)||\\
&+\dfrac{(2k+1)(k+1)\Delta}{u}\sum_{\underset{1\le i\le n_0}{1\le j\le k+1}}\log\dfrac{||\tilde f(z)||^d||P_{i,j}||}{|P_{i,j}(\tilde f)(z)|}+O(1),
\end{split}
\end{align}
where the term $O(1)$ does not depend on $z$. 

Set $p=N-k+1, \ m_0=(2k+1)(k+1)\Delta$ and $b=\dfrac{k+1}{uH_Y(u)}$. Then from above inequality, we get
\begin{align}\label{3.10}
\begin{split}
\log \dfrac{||\tilde f (z)||^{\frac{1}{p}dq-d(k+1)-\frac{dm_0l}{u}}.|W^\alpha(\tilde{F}(z))|^b}{\prod_{i=1}^q|Q_i(\tilde f)(z)|^{\frac{1}{p}}}&- \dfrac{m_0}{u}\sum_{\underset{1\le i\le n_0}{1\le j\le k+1}}\log\dfrac{||P_{i,j}||}{|P_{i,j}(\tilde f)(z)|}\\
&\le b \log\dfrac{|W^\alpha(\tilde{F}(z))|}{\prod_{i=1}^{H_Y(u)}|L_{i,z}(\tilde F(z))|}+O(1).
\end{split}
\end{align}
$$\hspace{5cm}$$
Here we note that $L_{i,z}$ depends on $i$ and $z$, but the number of these linear forms is finite. We denote by $\mathcal L$ the set of all $L_{i,z}$ occuring in the above inequalities.

Then there exists a positive constant $K_0$ such that
\begin{align*}
\dfrac{||\tilde f (z)||^{\frac{1}{p}dq-d(k+1)-\frac{dm_0l}{u}}.|W^\alpha(\tilde{F}(z))|^b . \prod_{\underset{1\le i\le n_0}{1\le j\le k+1}}|P_{i,j}(\tilde f)(z)|^{\frac{m_0}{u}}}{\prod_{i=1}^q|Q_i(\tilde f)(z)|^{\frac{1}{p}}}\le K_0^b.S_{\mathcal J}^b,
\end{align*}
where $ S_{\mathcal J}=\dfrac{|W^\alpha(\tilde{F}(z))|}{\prod_{L\in\mathcal J}|L(\tilde F(z))|}$ for some $\mathcal J \subset \mathcal{L}$ with $\# \mathcal{J} = H_Y(u)$ so that $\{L\in\mathcal J\}$ is linearly independent.

We now estimate the quantity $\nu_{W(\tilde F)}(r)$. We consider a point $z\in\B^m(R_0)$ outside the indeterminacy locus of $f$. We see that $\nu^0_{Q_i(f)}(z)=0$ for all $i\ge N+1$, since $\{Q_1,\ldots,Q_q\}$ is in $N$-subgeneral position in $V$. We set $c_{i,j}=\max\{0,\nu^0_{P_{i,j}}(z)-n_u\}$ and 
$${\bf c}=(c_{1,1},\ldots,c_{1,k+1},\ldots,c_{n_0,1},\ldots,c_{n_0,k+1})\in\mathbb Z^l_{\ge 0}.$$
Then there are 
$${\bf a}_i=(a_{i,1,1},\ldots,a_{i,1,k+1},\ldots,a_{i,n_0,1},\ldots,a_{i,n_0,k+1}),a_{i,j,s}\in\{1,\ldots,l_u\}$$
such that ${\bf y}^{{\bf a}_1},\ldots,{\bf y}^{{\bf a}_{H_Y(u)}}$ is a basic of $Y_u$ and
$$ S_Y(m,{\bf c})=\sum_{i=1}^{H_Y(u)}{\bf a}_i\cdot{\bf c}.$$
Similarly as above, we write ${\bf y}^{{\bf a}_i}=L_i(v_1,\ldots,v_{H_Y(u)})$, where $L_1,\ldots,L_{H_Y(u)}$ are independent linear forms in variables $y_{i,j}\ (1\le i\le n_0,1\le j\le k+1)$. By the property of the general Wronskian, we see that 
$$W(\tilde F)=cW(L_1(\tilde F),\ldots,L_{H_Y(u)}(\tilde F)),$$
where $c$ is a nonzero constant. This yields that
$$ \nu^0_{W(\tilde F)}(z)=\nu^0_{W(L_1(\tilde F),\ldots,L_{H_Y(u)}(\tilde F))}(z)\ge\sum_{i=1}^{H_Y(u)}\max\{0,\nu^0_{L_i(\tilde F)}(z)-n_u\}$$
We also easily see that $ \nu^0_{L_i(\tilde F)}(z)=\sum_{\underset{1\le j\le n_0}{1\le s\le k+1}}a_{i,j,s}\nu^0_{P_{j,s}(\tilde f)}(z), $ and hence
$$ \max\{0,\nu^0_{L_i(\tilde F)}(z)-n_u\}\ge\sum_{i=1}^{H_Y(u)}a_{i,j,s}c_{j,s}={{\bf a}_i}\cdot{\bf c}. $$
Thus, we have
\begin{align}\label{3.11}
 \nu^0_{W(\tilde F)}(z)\ge\sum_{i=1}^{H_Y(u)}{{\bf a}_i}\cdot{\bf c}=S_Y(u,{\bf c}).
\end{align}
Since $P_{1,1},\ldots,P_{1,k+1}$ are in general position, then by Lemma \ref{2.2} we have
$$ e_Y({\bf c})\ge \Delta\cdot\sum_{j=1}^{k+1}c_{1,j}=\Delta\cdot\sum_{j=1}^{k+1}\max\{0,\nu^0_{P_{1,j}(f)}(z)-n_u\}. $$
On the other hand, by Theorem \ref{2.1} we have 
\begin{align*}
 S_Y(u,{\bf c}) &\ge\dfrac{uH_Y(u)}{(k+1)\Delta}e_Y({\bf c})-(2k+1)\Delta H_Y(u)\max_{\underset{1\le i\le n_0}{1\le j\le k+1}}c_{i,j}\\
&\ge \dfrac{uH_Y(u)}{k+1}\sum_{j=1}^{k+1}\max\{0,\nu^0_{P_{1,j}(\tilde f)}(z)-n_u\}-(2k+1)\Delta H_Y(u)\max_{\underset{1\le i\le n_0}{1\le j\le k+1}}\nu^0_{P_{i,j}(\tilde f)}(z).
\end{align*}
Combining this inequality and (\ref{3.11}), we have
\begin{align}\label{3.12}
\begin{split}
\dfrac{k+1}{duH_Y(u)}\nu^0_{W(\tilde F)}(z)\ge&\dfrac{1}{d}\sum_{j=1}^{k+1}\max\{0,\nu^0_{P_{1,j}(\tilde f)}(z)-n_u\}\\
&-\dfrac{(2k+1)(k+1)\Delta}{du}\max_{\underset{1\le i\le n_0}{1\le j\le k+1}}\nu^0_{P_{i,j}(\tilde f)}(z).
\end{split}
\end{align}
Also it is easy to see that $\nu^0_{P_{1,j}(\tilde f)}(z)\ge \nu^0_{Q_{N-k+j}(\tilde f)}(z)$ for all $2\le j\le k+1$. Therefore, we have
\begin{align*}
(N-k+1)&\sum_{j=1}^{k+1}\max\{0,\nu^0_{P_{1,j}(\tilde f)}(z)-n_u\}\\
&\ge (N-k+1)\max\{0,\nu^0_{Q_{1}(\tilde f)}(z)-n_u\}+\sum_{j=2}^{k+1}\max\{0,\nu^0_{Q_{N-k+j}(\tilde f)}(z)-n_u\}\\
&\ge\sum_{i=1}^{N+1}\max\{0,\nu^0_{Q_i(\tilde f)}(z)-n_u\}=\sum_{i=1}^{q}\max\{0,\nu^0_{Q_i(\tilde f)}(z)-n_u\}.
\end{align*}
Combining this inequality and (\ref{3.12}), we have
\begin{align*}
\dfrac{(N-k+1)(k+1)}{duH_Y(u)}&\nu^0_{W(\tilde F)}(z)\ge\dfrac{1}{d}\sum_{i=1}^{q}\max\{0,\nu^0_{Q_i(\tilde f)}(z)-n_u\}\\
&-\dfrac{(N-k+1)(2k+1)(k+1)\Delta}{du}\max_{\underset{1\le i\le n_0}{1\le j\le k+1}}\nu^0_{P_{i,j}(\tilde f)}(z)\\
\ge& \dfrac{1}{d}\sum_{i=1}^{q}(\nu^0_{Q_i(\tilde f)}(z)-\min\{\nu^0_{Q_i(\tilde f)}(z),n_u\})\\
&-\dfrac{(N-k+1)(2k+1)(k+1)\Delta}{du}\max_{\underset{1\le i\le n_0}{1\le j\le k+1}}\nu^0_{P_{i,j}(\tilde f)}(z).
\end{align*}

Therefore,
\begin{align}\label{3.13}
\dfrac{1}{p}\sum_{i=1}^{q}\nu^0_{Q_i(\tilde f)}(z)-b\nu^0_{W(\tilde F)}(z)\le \dfrac{1}{p}\sum_{i=1}^{q}\min\{\nu^0_{Q_i(\tilde f)}(z),n_u\}+\dfrac{m_0}{u}\max_{\underset{1\le i\le n_0}{1\le j\le k+1}}\nu^0_{P_{i,j}(\tilde f)}(z).
\end{align}

Assume that 
$$ \rho\Omega_f+\dfrac{\sqrt{-1}}{2\pi}\partial\bar\partial\log h^2\ge \ric\omega.$$
We now suppose that
$$ \sum_{j=1}^q\delta^{[H_Y(u)-1]}_f(Q_j)> p(k+1)+\dfrac{pm_0l}{u}+\dfrac{\rho \epsilon H_Y(u)(H_Y(u)-1)}{d}.$$
Then, for each $j\in\{1,\ldots ,q\},$ there exist constants $\eta_j>0$ and continuous plurisubharmonic function $\tilde u_j$ such that 
$e^{\tilde u_j}|\varphi_j|\le ||\tilde f||^{d\eta_j},$ where $\varphi_j$ is a holomorphic function with $\nu_{\varphi_j}=\min\{n_u,f^*Q_j\}$ and
$$ q-\sum_{j=1}^q\eta_j>  p(k+1)+\dfrac{pm_0l}{u}+\dfrac{\rho \epsilon p H_Y(u)(H_Y(u)-1)}{d}.$$
Put $u_j=\tilde u_j+\log |\varphi_j|$, then $u_j$ is a plurisubharmonic and
$$ e^{u_j}\le ||\tilde f||^{d\eta_j},\ j=1,\ldots ,q. $$
Let
$$v (z)=\log\left |(z^{\alpha_0+\cdots+\alpha_{n_u}})^b\dfrac{(W^{\alpha}(\tilde F(z)))^b}{(\prod_{i=1}^{q}Q_i(\tilde f)(z))^{\frac{1}{p}}}\right |+\dfrac{1}{p}\sum_{j=1}^q u_j(z)+\dfrac{m_0}{u}\sum_{\underset{1\le i\le n_0}{1\le j\le k+1}}\log|P_{i,j}(\tilde f)(z)|.$$
Therefore, we have the following current inequality
\begin{align*}
2\ddc[v]&\ge b[\nu_{W^{\alpha}(\tilde F)}]-\dfrac{1}{p}\sum_{j=1}^q[\nu_{Q_i(\tilde f)}]+\dfrac{1}{p}\sum_{j=1}^q2\ddc[u_j]+\dfrac{m_0}{u}\sum_{\underset{1\le i\le n_0}{1\le j\le k+1}}[\nu_{P_{i,j}(\tilde f)}(z)]\\
&\ge b[\nu_{W^{\alpha}(\tilde F)}]-\dfrac{1}{p}\sum_{j=1}^q[\nu_{Q_i(\tilde f)}]+\dfrac{1}{p}\sum_{j=1}^q[\min\{n_u,\nu_{Q_i(\tilde f)}\}]+\dfrac{m_0}{u}\max_{\underset{1\le i\le n_0}{1\le j\le k+1}}[\nu_{P_{i,j}(\tilde f)}(z)]\ge 0.
\end{align*}
This implies that $v$ is a plurisubharmonic function on $\B^m(R_0)$.

On the other hand, by the growth condition of $f$, there exists a continuous plurisubharmonic function $\omega\not\equiv\infty$ on $\B^m(R_0)$ such that
\begin{align*}
e^\omega {\rm d}V\le ||\tilde f||^{2\rho}v_m
\end{align*}
Set
$$t=\dfrac{2\rho}{\dfrac{1}{p}d\left(q-p(k+1)-\dfrac{pm_0l}{u}-\sum_{j=1}^q\eta_j\right)}>0$$ 
and 
$$\lambda (z)=(z^{\alpha_0+\cdots +\alpha_{n_u}})^b\dfrac{\left (W^{\alpha}(\tilde F(z))\right )^b\cdot \prod_{\underset{1\le i\le n_0}{1\le j\le k+1}}|P_{i,j}(\tilde f)(z)|^{\frac{m_0}{u}}}{Q_1^{\frac{1}{p}}(\tilde f)(z)\cdots Q_q^{\frac{1}{p}}(\tilde f)(z)}.$$ 

We choose $u$ the positive integer such that $\epsilon-\dfrac{pm_0l}{u}>0$ and $u\le pm_0l\epsilon^{-1}+1.$
Then, we see that
$$ \dfrac{H_Y(u)(H_Y(u)-1)b}{2}t< \dfrac{H_Y(u)(H_Y(u)-1)\dfrac{\epsilon}{p}}{2}\cdot\dfrac{2\rho}{\dfrac{1}{p}d\dfrac{\rho \epsilon H_Y(u)(H_Y(u)-1)}{d}}=1,$$
and the function $\zeta=\omega+ tv$ is plurisubharmonic on the K\"{a}hler manifold $M$. Choose a position number $\delta$ such that $0<\dfrac{H_Y(u)(H_Y(u)-1)b}{2}t<\delta<1.$
Then, we have
\begin{align}\label{3.14}
\begin{split}
e^\zeta dV&=e^{\omega +tv}dV\le e^{tv}||\tilde f||^{2\rho}v_m=|\lambda|^{t}(\prod_{j=1}^qe^{t\frac{1}{p}u_j})||\tilde f||^{2\rho}v_m\\
&\le|\lambda|^t ||\tilde f||^{2\rho+\sum_{j=1}^q\frac{1}{p}dt\eta_j}v_m=|\lambda|^t ||\tilde f||^{t\frac{1}{p}d(q-p(k+1)-\frac{pm_0l}{u})}v_m.
\end{split}
\end{align}

(a) We first consider the case where $R_0< \infty$ and $ \lim\limits_{r\rightarrow R_0}\sup\dfrac{T_f(r,r_0)}{\log 1/(R_0-r)}<\infty.$

It suffices for us to proof the Theorem in the case where $\mathbb{B}^m(R_0)=\mathbb{B}^m(1).$

Integrating both sides of (\ref{3.14}) over $\mathbb{B}^m(1),$  we have
\begin{align}\label{3.15}
\begin{split}
\int_{\B^m(1)}e^\zeta dV&\le \int_{\B^m(1)}|\lambda|^t ||\tilde f||^{t(\frac{1}{p}dq-d(k+1)-\frac{dm_0l}{u})}v_m.\\
&=2m\int_0^1r^{2m-1}\left (\int_{S(r)}\bigl (|\lambda| ||\tilde f||^{\frac{1}{p}dq-d(k+1)-\frac{dm_0l}{u}}\bigl )^t\sigma_m\right )dr\\
&\le 2m\int_0^1r^{2m-1}\left (\int_{S(r)}\sum_{\mathcal J}\bigl |(z^{\alpha_0+\cdots +\alpha_{n_u}})K_0S_{\mathcal J}\bigl |^{bt}\sigma_m\right )dr,
\end{split}
\end{align}
where the summation is taken over all $\mathcal J\subset\mathcal L$ with $\sharp J=H_Y(u)$ and $\{L\in\mathcal J\}$ is linearly independent.

We note that $(\sum_{i=0}^{n_u}|\alpha_i|)bt\le \dfrac{H_Y(u)(H_Y(u)-1)b}{2}t<\delta<1$. Then by Proposition \ref{pro2.2} there exists a positive constant $K_1$ such that, for every $0<r_0<r<r'<1,$ we have
\begin{align*}
\int_{S(r)}\left |(z^{\alpha_0+\cdots +\alpha_{n_u}})K_0S_{\mathcal J}(z)\right |^{bt}\sigma_m\le K_1\left (\dfrac{r'^{2m-1}}{r'-r}dT_f(r',r_0)\right )^{\delta}.
\end{align*}
Choosing $r'=r+\dfrac{1-r}{eT_f(r,r_0)}$, we get
$$ T_f(r',r_0)\le 2T_f(r,r_0)$$
outside a subset $E\subset [0,1]$ with $\int_E\dfrac{dr}{1-r}<+\infty$. Hence, the above inequality implies that
\begin{align*}
\int_{S(r)}\left |(z^{\alpha_1+\cdots +\alpha_{n_u}})K_0S_J(z)\right |^{bt}\sigma_m\le \dfrac{K}{(1-r)^\delta}\left (\log\dfrac{1}{1-r}\right )^{\delta}
\end{align*}
for all $r$ outside $E$, where $K$ is a some positive constant. By choosing $K$ large enough, we may assume that the above inequality holds for all $r\in (0;1)$.
Then, the inequality (\ref{3.15}) yields that
\begin{align*}
\int_{\B^m(1)}e^\zeta dV&\le 2m\int_0^1r^{2m-1}\dfrac{K}{(1-r)^\delta}\left (\log\dfrac{1}{1-r}\right )^{\delta}dr< +\infty
\end{align*}
This contradicts the results of S. T. Yau  and L. Karp (see \cite{K82,Y76}). 

Hence, we must have
$$\sum_{j=1}^q\delta^{[H_Y(u)-1]}_{f}(Q_j)\le p(k+1)+\dfrac{pm_0 l}{u}+\dfrac{\rho \epsilon H_Y(u)(H_Y(u)-1)}{d}.$$
Since $\epsilon -\dfrac{pm_0l}{u}>0$, the above inequality implies that
$$\sum_{j=1}^q\delta^{[H_Y(u)-1]}_{f}(Q_j)\le (N-k+1)(k+1)+\epsilon+\dfrac{\rho\epsilon H_Y(u)(H_Y(u)-1)}{d}.  $$
The theorem is proved in this case.

We note that $\deg Y=\Delta\le d^k\deg (V)$ and it is enough for us to consider the case where $\epsilon <q$. Then the number $n_u$ is estimated as follows
\begin{align*}
n_u&=H_Y(u)-1\le\Delta \binom{k+u}{k}\le d^k\deg (V)e^{k}\left(1+\dfrac{u}{k}\right)^{k}\\
&<d^k\deg (V)e^k\left(1+\dfrac{p(2k+1)(k+1)\Delta l\epsilon^{-1}+1}{k}\right)^k\\
&<d^k\deg (V)e^k\left(p(2k+4)\Delta l \epsilon^{-1}\right)^k\\
&<d^k\deg (V)e^k\left(p(2k+4)d^k\deg (V) l \epsilon^{-1}\right)^k.
\end{align*} 
This implies that
$$n_u\le \left[\deg (V)^{k+1}e^kd^{k^2+k}(N-k+1)^k(2k+4)^kl^k\epsilon^{-k}\right]=M_0-1.$$

(b) We now consider the remaining case where $ \lim\limits_{r\rightarrow R_0}\sup\dfrac{T(r,r_0)}{\log 1/(R_0-r)}= \infty .$

Repeating the argument in the proof of Theorem \ref{1.1}, we only need to prove the following theorem.
\begin{theorem}
With the assumption of Theorem {1.1}. Then, we have
$$(q-p(k+1)-\epsilon)T_f(r,r_0)\le \sum_{i=1}^{q}\dfrac{1}{d}N^{[M_0-1]}_{Q_i(\tilde f)}(r)+S(r),$$
where $S(r)$ is evaluated as follows:

(i) In the case $R_0<\infty,$ $$S(r)\le K(\log^+\dfrac{1}{R_0-r}+\log^+T_f(r,r_0))$$
for all $0<r_0<r<R_0$ outside a set $E\subset [0,R_0]$ with $\int_E\dfrac{dt}{R_0-t}<\infty$ and $K$ is a positive constant.

(ii) In the case  $R_0=\infty,$ $$S(r)\le K(\log r+\log^+T_f(r,r_0))$$
for all $0<r_0<r<\infty$ outside a set $E'\subset [0,\infty]$ with $\int_{E'} dt<\infty$ and $K$ is a positive constant.
\end{theorem}
	
\noindent\textit{Proof.} Repeating the above argument, we have
\begin{align*}
\int\limits_{S(r)}&\left|(z^{\alpha_0+\cdots +\alpha_{n_u}})^b\dfrac{||\tilde f(z)||^{\frac{1}{p}qd-d(k+1)-\frac{dm_0l}{u}}|W^{\alpha}(\tilde F)(z)|^b\cdot \prod_{\underset{1\le i\le n_0}{1\le j\le k+1}}|P_{i,j}(\tilde f)(z)|^{\frac{m_0}{u}}}{\prod_{i=1}^{q}|Q_i(\tilde f)(z)|^{\frac{1}{p}}}\right|^{t}\sigma_m\\
&\le K_1\left(\dfrac{R^{2m-1}}{R-r}dT_f(R,r_0)\right)^{\delta}.
\end{align*}
for every $0<r_0<r<R<R_0$. Using the concativity of the logarithmic function, we have
\begin{align*}
\begin{split}
b&\int_{S(r)}\log |(z^{\alpha_0+\cdots +\alpha_{n_u}})|\sigma_m+\left(\dfrac{1}{p}qd-d(k+1)-\dfrac{dm_0l}{u}\right)\int_{S(r)}\log ||\tilde f||\sigma_m\\
&+b\int_{S(r)}\log |W^{\alpha}(\tilde F)|\sigma_m+\dfrac{m_0}{u}\sum_{\underset{1\le i\le n_0}{1\le j\le k+1}} \int_{S(r)}\log |P_{i,j}(\tilde f)|\sigma_m\\
&-\dfrac{1}{p}\sum_{j=1}^q \int_{S(r)}\log |Q_j(\tilde f)|\sigma_m\le K\left(\log^+\dfrac{R}{R-r}+\log^+T_f(R,r_0)\right)
\end{split}
\end{align*}
for some positive constant $K$. By Jensen's formula, this inequality implies that
\begin{align}\label{3.16}
\begin{split}
\biggl(\dfrac{1}{p}qd&-d(k+1)-\dfrac{dm_0l}{u}\biggl)T_f(r,r_0)+bN_{W^{\alpha}(\tilde F)}(r)+\dfrac{m_0}{u}\sum_{\underset{1\le i\le n_0}{1\le j\le k+1}}N_{P_{i,j}(\tilde f)}(r)\\
&-\dfrac{1}{p}\sum_{i=1}^qN_{Q_i(\tilde f)}(r)\le K\left(\log^+\dfrac{R}{R-r}+\log^+T_f(R,r_0)\right)+O(1).
\end{split}
\end{align}
From (\ref{3.12}), we have 
$$ \dfrac{1}{p}\sum_{i=1}^qN_{Q_i(\tilde f)}(r)-bN_{W^{\alpha}(\tilde F)}(r)-\dfrac{m_0}{u}\sum_{\underset{1\le i\le n_0}{1\le j\le k+1}}N_{P_{i,j}(\tilde f)}(r)\le\dfrac{1}{p}\sum_{i=1}^qN^{[M_0-1]}_{Q_i(\tilde f)}(r).$$
Combining this estimate and (\ref{3.16}), we get
\begin{align}\label{new}
\begin{split}
\left(q-p(k+1)-\epsilon\right)T_f(r,r_0)\le&\sum_{i=1}^{q}\dfrac{1}{d}N^{[M_0-1]}_{Q_i(\tilde f)}(r)\\
&+K\left(\log^+\dfrac{R}{R-r}+\log^+T_f(R,r_0)\right)+O(1).
\end{split}
\end{align}
Choosing $R=r+\dfrac{R_0-r}{eT_f(r,r_0)}$ if $R_0<\infty$ and $R=r+\dfrac{1}{T_f(r,r_0)}$ if $R_0=\infty$, we see that
$$ T_f\left(r+\dfrac{R_0-r}{eT_f(r,r_0)},r_0\right)\le 2T_f(r,r_0)$$
outside a subset $E\subset [0,R_0)$ with $\int_E\dfrac{dr}{R_0-r}<+\infty$ in the case $R_0<\infty$ and  
$$ T_f\left(r+\dfrac{1}{T_f(r,r_0)},r_0\right)\le 2T_f(r,r_0)$$
outside a subset $E'\subset [0,\infty)$ with $\int_{E'}dr<\infty$ in the case $R_0=\infty$.
Thus, from (\ref{new}) we have
$$ (q-p(k+1)-\epsilon) T_f(r,r_0)\le\sum_{i=1}^{q}\dfrac{1}{d}N^{[M_0-1]}_{Q_i(\tilde f)}(r)+S(r). $$
This implies that
$$\sum_{j=1}^q\delta^{[M_0-1]}_{f}(Q_j)\le\sum_{j=1}^q\delta^{[M_0-1]}_{f,*}(Q_j)\le p(k+1)+\epsilon.$$
The theorem is proved in this case.
\end{proof}

\section{Value distribution of the Gauss map of a complete regular submanifold of $\C^m$}

Let $M$ be a connected complex manifold of dimension $m$. Let 
$$f = (f_1,\ldots , f_n) : M \rightarrow \C^n$$
be a regular submanifold of $\C^n$; namely, $f$ be a holomorphic map of $M$ into $\C^n$ such that $\rank({\rm d}_pf) = \dim M$ for every point 
$p\in M.$ We assign each point $p\in M$ to the tangent space $T_p(M)$ of $M$ at $p$ which may be considered as an $m$-dimensional linear subspace of $T_{f(p)}(\C^n)$. Also, each tangent space $T_p(\C^n)$ can be identified with $T_0(\C^n)= \C^n$ by a parallel translation. Hence, each $T_p(M)$ is corresponded to a point $G(p)$ in the complex Grassmannian manifold $G(m,n)$ of all $m$-dimensional linear subspaces of $\C^n$.

\begin{definition}
	The map $G : p\in M \mapsto G(p)\in G(m,n)$ is called the Gauss map of the map $f : M \rightarrow \C^n$. 
\end{definition}
The space $G(m,n)$ is canonically embedded in $\P^N(\C)=\P(\bigwedge^m\C^n)$, where $N =\binom{m}{n}-1$. Then we may identify the Gauss map $G$ with a holomorphic mapping of $M$ into $\P^N(\C)$ given as follows: taking holomorphic local coordinates $(z_1,\ldots ,z_n)$ defined on an open set $U$, we consider the map 
$$\bigwedge := D_1f\wedge\cdots\wedge D_nf: U\rightarrow\bigwedge^m\C^n\setminus\{0\},$$
where 
$$D_if = (\dfrac{\partial f_1}{\partial z_i},\ldots ,\dfrac{\partial f_n}{\partial z_i}).$$
Then, locally we have
$$G = \pi\circ\bigwedge,$$
where $\pi : \C^{N+1} \setminus\{0\}\rightarrow\P^N(\C)$ is the canonical projection map. A regular submanifold $M$ of $\C^m$ is considered 
as a K\"{a}hler manifold with the metric $\omega$ induced from the standard flat metric on $\C^m$. We denote by ${\rm d}V$ the volume form on $M$. For arbitrarily holomorphic coordinates $z_1,\ldots ,z_m,$ we see that
$${\rm d}V =|\bigwedge|^2\left (\frac{\sqrt{-1}}{2}\right )^m{\rm d}z_1\wedge {\rm d}\bar{z_1}\wedge\cdots\wedge {\rm d}z_m\wedge {\rm d}z_m,$$ 
where
$$|\bigwedge|^2=\sum_{1\le i_1<\cdots <i_m\le n}\dfrac{\partial(f_{i_1},\ldots,f_{i_m})}{\partial(z_1,\ldots,z_m)}^2.$$
Therefore, for a regular submanifold $f : M \rightarrow \C^m$, the Gauss map $G : M \rightarrow \P^N(\C)$ satisfies the following growth condition
$$\Omega_G + \ddc \log h^2 = \ddc \log |\bigwedge|^2 = \ric (\omega),$$
where $h = 1$.
Then Corollary \ref{1.2} immediately gives us the following.
\begin{theorem}\label{6.1.}
	Let $M$ be a complex manifold of dimension $m$ such that the universal covering of $M$ is biholomorphic to a ball $\B^m(R_0)\ (0< R_0\le +\infty)$ 
	in $\C^m.$ Let $f :M\rightarrow\C^n$ be a complete regular submanifold and $G: M\rightarrow \P^N(\C)$ be the Gauss map, where $N=\binom{m}{n}-1$. Let $Q_1,\ldots ,Q_q$ be $q$ hypersurfaces of degree $d_j \ (1\leq j\leq q)$ in general position in $\P^N(\C)$. Let $d$ be the least common multiple of $d_i$'s, i.e., $d =l.c.m.\{d_1,\ldots ,d_q\}$. Then, for every $\epsilon>0$ we have
	$$\sum_{i=1}^q\delta_G^{[M_0-1]}(Q_i) \le \left(\dfrac{N}{2}+1\right)^2+1+\frac{\rho M_0(M_0-1)}{d},$$
	where $M_0= \left[d^{N^2+N}e^N(2N+4)^Nl^N\epsilon^{-N}+1\right]$ and $l=(N+1)q!$.
\end{theorem}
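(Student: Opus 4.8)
The plan is to obtain this statement as an immediate consequence of the general-position specialization of Theorem~\ref{1.1} (the Corollary stated just after it), applied to the Gauss map $G\colon M\to\P^N(\C)$ with $N=\binom{n}{m}-1$. The only hypothesis of that corollary not already present in the data is the curvature inequality $\rho\Omega_G+\ddc\log h^2\ge\ric\,\omega$, and this is precisely what the discussion preceding the theorem supplies. Indeed, for a complete regular submanifold $f\colon M\to\C^n$ carrying the metric $\omega$ induced from the flat metric of $\C^n$, one has, in local holomorphic coordinates, $dV=|\bigwedge|^2(\tfrac{\sqrt{-1}}{2})^m\,dz_1\wedge d\bar z_1\wedge\cdots\wedge dz_m\wedge d\bar z_m$, where $\bigwedge=D_1f\wedge\cdots\wedge D_nf$ is a local reduced representation of $G$; hence $\ric\,\omega=\ddc\log|\bigwedge|^2=\Omega_G$. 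Consequently the hypothesis of Theorem~\ref{1.1} holds with $h\equiv 1$ for every $\rho\ge 1$ (equality when $\rho=1$, and the remaining cases follow from $\Omega_G\ge 0$).

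With this in hand I would just verify the remaining items: $G$ is a holomorphic, hence meromorphic, map of $M$ into $\P^N(\C)$; we take it to be algebraically nondegenerate, as Theorem~\ref{1.1} requires; the $Q_j$ are hypersurfaces of degree $d_j$ in general position in $\P^N(\C)$; and $d=\mathrm{l.c.m.}\{d_1,\dots,d_q\}$. All the hypotheses of the Corollary are then met with $N$ in place of $n$ and with the parameter $\rho$ any real number $\ge 1$.

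Substituting $n\mapsto N$ into the conclusion of the Corollary gives, for every $\epsilon>0$,
$$\sum_{i=1}^q\delta_G^{[M_0-1]}(Q_i)\le\Bigl(\frac N2+1\Bigr)^2+1+\frac{\rho M_0(M_0-1)}{d},$$
with $M_0=\bigl[d^{N^2+N}e^N(2N+4)^Nl^N\epsilon^{-N}+1\bigr]$ and $l=(N+1)q!$, which is exactly the assertion (in particular one may take $\rho=1$, giving equality in the curvature hypothesis). The whole argument rests on the classical identity $\ric\,\omega=\ddc\log|\bigwedge|^2$ for the induced metric and its reading as the pull-back form $\Omega_G$; beyond recording that $G$ is to be taken algebraically nondegenerate, there is no genuine obstacle, since everything downstream is a verbatim application of Theorem~\ref{1.1}.
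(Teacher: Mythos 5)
Your proposal is correct and follows essentially the same route as the paper: you verify the growth condition via $\ric\,\omega=\ddc\log|\bigwedge|^2=\Omega_G$ with $h\equiv 1$ (so $\rho=1$ suffices, and $\rho\ge 1$ works since $\Omega_G\ge 0$), and then invoke the general-position corollary of Theorem \ref{1.1} with $n$ replaced by $N$, which is exactly the paper's one-line derivation. Your explicit remark that $G$ must be assumed algebraically nondegenerate merely makes visible a hypothesis the paper leaves implicit in the statement of Theorem \ref{6.1.}.
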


\vskip0.2cm
{\footnotesize 
\noindent
{\sc Si Duc Quang}\\
$^1$ Department of Mathematics, Hanoi National University of Education,\\
 136-Xuan Thuy, Cau Giay, Hanoi, Vietnam.\\
$^2$ Thang Long Institute of Mathematics and Applied Sciences,\\
 Nghiem Xuan Yem, Hoang Mai, HaNoi, Vietnam.\\
\textit{E-mail}: quangsd@hnue.edu.vn}

\vskip0.2cm
{\footnotesize 
\noindent
{\sc Quynh Ngoc Le}\\
Faculty of Education, An Giang University, Vietnam National University Ho Chi Minh City,\\
Dong Xuyen, Long Xuyen, An Giang, Vietnam\\
\textit{E-mail}: lnquynh@agu.edu.vn}

\vskip0.2cm
{\footnotesize 
\noindent
{\sc Nguyen Thi Nhung}\\
Department of Mathematics, Thang Long University,\\
Nghiem Xuan Yem, Hoang Mai, HaNoi, Vietnam.\\
\textit{E-mail}: hoangnhung227@gmail.com}

\end{document}